\newtheorem{thm}{Theorem}
\newtheorem{rem}{Remark}
\newtheorem{lem}{Lemma}
\newtheorem{cor}{Corollary}
\DeclareMathOperator{\1}{\textbf{1}}
\DeclareMathOperator{\id}{id}
\global\long\def\epsilon{\varepsilon} 
\DeclareMathAlphabet{\mathpzc}{OT1}{pzc}{m}{it}
\begin{document}
\date{}

\title{\bf On  sums of logarithmic  averages of gcd-sum functions}
\author{Isao Kiuchi and Sumaia Saad Eddin}

\maketitle
{\def\thefootnote{}
\footnote{{\it Mathematics Subject Classification 2010: 11A25, 11N37, 11Y60.\\ 
Keywords: $\gcd$-sum functions; Ramanujan sums; divisor function;  Euler totient function}} 

\begin{abstract}
Let 
$
\gcd(k,j) 
$
be the greatest common divisor of the integers $k$ and $j$. 
For any arithmetical function $f$,  we   establish  several  asymptotic formulas  for  weighted  averages  
of gcd-sum functions  with  weight concerning logarithms,  
that is  
$$
\sum_{k\leq x}\frac{1}{k} \sum_{j=1}^{k}f(\gcd(k,j)) \log j. 
$$
More precisely, we  give  asymptotic formulas  for various multiplicative functions 
such as $f=\id$,  $\phi$,  $\id_{1+a}$ and $\phi_{1+a}$ with $-1<a<0$.  We also establish some formulas of 
Dirichlet series having coefficients of the sum function 
$
\sum_{j=1}^{k}s_{k}(j)\log j 
$ where $s_{k}(j)$ is Anderson--Apostol sums. 
\end{abstract}


                               \section{Introduction and main results}


Let  $\gcd(k,l)$ be  the greatest common divisor of the integers $k$ and $l$, and  let  
 $\mu$   be the M\"obius function.  
It is known that the Ramanujan sum is defined by 
$$ 
c_{k}(j)=\sum_{d|\gcd(j,k)}d\mu\left(\frac{k}{d}\right). 
$$ 
We recall that the Dirichlet convolution $f * g$ of the arithmetical functions $f$ and $g$ is defined by  $(f * g)(n) = \sum_{d \mid n} f(d) g\left({n}/{d}\right)$  
for any positive integer $n$. Let  $\Lambda$  be  the von Mangoldt function defined by 
$
\Lambda = \mu*\log.
$ 
The weighted average of the Ramanujan sum with weight concerning logarithms
was first established by T\'{o}th~\cite{To}. For any positive integer $k$, he derived the following interesting identity
\begin{align}                                       \label{toth}
\frac{1}{k}\sum_{j=1}^{k}c_{k}(j)\log j 
&= \Lambda(k) + \sum_{d|k}\frac{\mu(d)}{d}\log d!. 
\end{align} 
There is in the literature a large number of generalizations of $c_{k}(j)$.  The following sum is one of the most common generalization of the Ramanujan sum, and due to Anderson-Apostol, see~\cite{AA}, 
$$
s_{k}(j) := \sum_{d|\gcd(k,j)}f(d)g\left(\frac{k}{d}\right),
$$
for any positive integers $k$ and $j$  and  any arithmetical functions $f$ and $g$.\\

We define the arithmetic functions $\1(n)$ and $\id (n)$ by $\1(n) = 1$ 
and $\id (n)=n$ respectively, for any positive integer  $n$. The arithmetical function $\id_{a}$ is given by  $\id_{a}(n)=n^a$ for any real number $a$.
In case of $a=1$, we write $\rm id_1=\id$. 
The  weighted average of  $s_{k}(j)$  with weight concerning  logarithms  was  first  studied by the first author, Minamide and  Ueda~\cite{KMU}. 
They showed that 
\begin{equation}                                                                                   \label{KMU}
\sum_{j=1}^{k}s_{k}(j)\log j = (f\cdot \log * g\cdot \id)(k) + (f*g\cdot L)(k),
\end{equation}
 where
$
L(d)=\sum_{m=1}^{d}\log m.     
$ 
That is a generalization of \eqref{toth}. Define
 \begin{equation*}
K(x;f,g) :=  \sum_{k\leq x}\frac{1}{k} \sum_{j=1}^{k}s_{k}(j) \log j,
 \end{equation*}
 for  any positive real number $x>1$. 
The first  purpose of this paper is to study $K(x;f,g)$ and prove that:


 \begin{thm}                                                                                     \label{th0}
Let $f$ and $g$ be  any arithmetical functions. 
There is a  certain  positive  constant   $\Theta$  
such that   
\begin{align}                                                                                 \label{sum_thK}
 K(x;f,g)  
=  \sum_{n\leq x} & \frac{(f*g\cdot \id)(n)}{n} \log \frac{n}{e} 
+  \frac{1}{2} \sum_{n\leq x}\frac{(f*g\cdot \log )(n)}{n}  \nonumber \\  
&+ \log\sqrt{2\pi} \sum_{n\leq x}\frac{(f*g)(n)}{n}          
+  {\Theta} \sum_{n\leq x}\frac{(f* g\cdot \id_{-1})(n)}{n}
\end{align} 
for  any positive real number $x>1$. 
\end{thm}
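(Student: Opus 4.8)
The plan is to reduce the double sum to Dirichlet-weighted sums of single convolutions by inserting the identity \eqref{KMU}, and then to resolve the factorial weight $L$ through Stirling's formula. Starting from the definition of $K(x;f,g)$ together with \eqref{KMU}, I would first write
\begin{equation*}
K(x;f,g) = \sum_{k\le x}\frac{(f\cdot\log * g\cdot\id)(k)}{k} + \sum_{k\le x}\frac{(f*g\cdot L)(k)}{k},
\end{equation*}
so that everything is expressed through the two convolutions $f\cdot\log * g\cdot\id$ and $f*g\cdot L$, where $L(d)=\log d!$.

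The crux is the treatment of the second sum. I would use Stirling's formula in the form
\begin{equation*}
L(d) = d\log\frac{d}{e} + \tfrac12\log d + \log\sqrt{2\pi} + R(d), \qquad 0 < R(d) < \frac{1}{12d},
\end{equation*}
where $R$ denotes the Binet remainder. Substituting this inside $(f*g\cdot L)(k)=\sum_{dm=k}f(d)g(m)L(m)$ splits the convolution into five pieces. The key observation, and the step that makes the theorem work, is that the leading piece $\sum_{dm=k}f(d)g(m)\,m\log m = (f*g\cdot\id\cdot\log)(k)$ combines with the first convolution $(f\cdot\log*g\cdot\id)(k)=\sum_{dm=k}f(d)\log(d)\,g(m)m$ via $\log d+\log m=\log(dm)=\log k$, collapsing the two $\log$-weighted terms into the single clean expression $\log(k)\,(f*g\cdot\id)(k)$. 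The $-m$ piece of Stirling then contributes $-(f*g\cdot\id)(k)$, so that together they yield $(\log k - 1)(f*g\cdot\id)(k)=\log\tfrac{k}{e}\,(f*g\cdot\id)(k)$, matching the first term of \eqref{sum_thK}.

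The lower-order pieces $\tfrac12\log m$ and $\log\sqrt{2\pi}$ produce, after division by $k$ and summation over $k\le x$, exactly $\tfrac12\sum_{n\le x}\frac{(f*g\cdot\log)(n)}{n}$ and $\log\sqrt{2\pi}\sum_{n\le x}\frac{(f*g)(n)}{n}$, the second and third terms of \eqref{sum_thK}. What remains is the sum $\sum_{k\le x}k^{-1}(f*g\cdot R)(k)$, which I would identify with the final term $\Theta\sum_{n\le x}\frac{(f*g\cdot\id_{-1})(n)}{n}$. Since $R(m)$ is positive and of exact order $R(m)\asymp m^{-1}=\id_{-1}(m)$, the summand $(f*g\cdot R)(k)$ is comparable to $(f*g\cdot\id_{-1})(k)$, giving the stated shape with a positive $\Theta$.

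I expect this last step to be the main obstacle. The algebraic reductions above are exact, but packaging the remainder into a single multiple of the $\id_{-1}$ sum requires controlling the Binet function uniformly, using $0<R(m)<\tfrac{1}{12m}$ together with $R(m)=\tfrac{1}{12m}+\Ocal(m^{-3})$, so that the passage from $R$ to $\Theta\cdot\id_{-1}$ is legitimate and $\Theta$ can be taken positive and bounded as $x$ varies. Care is needed here because $R$ is not literally a constant multiple of $\id_{-1}$, so the constant $\Theta$ must be extracted by comparing the two convolutions rather than by a termwise identity.
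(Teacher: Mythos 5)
Your proposal is correct and follows essentially the same route as the paper: insert the identity \eqref{KMU}, expand $L(m)=\log m!$ via Stirling's formula \eqref{Nielsen}, recombine $\log d+\log m=\log k$ to form the $\log\frac{n}{e}$ term, and read off the remaining three sums. Your closing caveat --- that the Binet remainder $R(m)$ is not literally a fixed multiple of $\id_{-1}(m)$, so extracting a single constant $\Theta$ in front of $\sum_{n\le x}(f*g\cdot\id_{-1})(n)/n$ needs justification (and for general complex-valued $f,g$ is genuinely delicate) --- is a real issue that the paper's own proof passes over silently.
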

The arithmetical functions  $\tau(n)$ and $\sigma (n)$   denote 
 the number and sum of the positive  divisors of $n$,   respectively. 
Let $\sigma_a$ be the generalized divisor function, for any real number $a$, defined by $\1*\id_a$. 
As a special case of Theorem~\ref{th0}, we take $g=f$ where $f$ is a completely  multiplicative. Then, we deduce immediately that:


\begin{cor}                                                                                \label{corol1}
Let $f$ be a completely multiplicative function, and let $l$ be a modified divisor function defined by  $l:=\1 *\log$. Under the hypotheses of Theorem~\ref{th0}, we have
\begin{multline}                                                                            \label{sum_thKL1}
K(x;f,f) 
= \sum_{n\leq x} \frac{f(n)\sigma(n)}{n}  \log \frac{n}{e} 
+  \frac{1}{2} \sum_{n\leq x}\frac{f(n)l(n)}{n}  \\
+ \log\sqrt{2\pi} \sum_{n\leq x} \frac{f(n)\tau(n)}{n}       
+  {\Theta} \sum_{n\leq x}\frac{f(n)\sigma_{-1}(n)}{n}.    
\end{multline}
\end{cor}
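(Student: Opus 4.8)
The plan is to specialize Theorem~\ref{th0} to the case $g=f$ with $f$ completely multiplicative, and then simplify each of the four Dirichlet-convolution coefficients appearing in \eqref{sum_thK} using standard identities for completely multiplicative functions. First I would recall the key property: if $f$ is completely multiplicative, then for any arithmetical function $h$ one has the factorization
\begin{equation*}
(f * f\cdot h)(n) = \sum_{d\mid n} f(d)\, f\!\left(\tfrac{n}{d}\right) h\!\left(\tfrac{n}{d}\right) = f(n) \sum_{d\mid n} h\!\left(\tfrac{n}{d}\right) = f(n)\,(\1 * h)(n),
\end{equation*}
where I have used $f(d)f(n/d)=f(n)$. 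This single identity is the engine that converts every convolution in \eqref{sum_thK} into $f(n)$ times a simpler divisor-type sum.

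Next I would apply this identity term by term. Taking $h=\id$ gives $(f*f\cdot\id)(n)=f(n)(\1*\id)(n)=f(n)\sigma(n)$, which handles the first sum. Taking $h=\log$ gives $(f*f\cdot\log)(n)=f(n)(\1*\log)(n)=f(n)\,l(n)$ by the definition $l:=\1*\log$, handling the second sum. Taking $h=\1$ gives $(f*f)(n)=f(n)(\1*\1)(n)=f(n)\tau(n)$, handling the third sum, since $\1*\1=\tau$. Finally, taking $h=\id_{-1}$ gives $(f*f\cdot\id_{-1})(n)=f(n)(\1*\id_{-1})(n)=f(n)\sigma_{-1}(n)$, matching the definition $\sigma_a=\1*\id_a$ and handling the last sum. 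Substituting these four simplifications into \eqref{sum_thK} yields exactly \eqref{sum_thKL1}.

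The argument is essentially a mechanical substitution, so there is no serious obstacle; the only point requiring minor care is bookkeeping the role of each weight function $h$ and verifying that $g\cdot h$ with $g=f$ produces $f\cdot h$ in the convolution, so that the completely multiplicative factorization applies cleanly. I would also note explicitly that complete multiplicativity (not merely multiplicativity) is what licenses pulling $f(n)$ outside the divisor sum, since it requires $f(d)f(n/d)=f(n)$ to hold for all factorizations $n=d\cdot(n/d)$, including those where $\gcd(d,n/d)>1$.
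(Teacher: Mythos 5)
Your proposal is correct and is exactly the intended argument: the paper treats this corollary as an immediate specialization of Theorem~\ref{th0} (it offers no separate proof beyond ``we deduce immediately''), and the identity $(f*f\cdot h)(n)=f(n)(\1*h)(n)$ for completely multiplicative $f$, applied with $h=\id,\log,\1,\id_{-1}$, is precisely what that deduction rests on. Your explicit remark that complete (not mere) multiplicativity is what allows $f(d)f(n/d)=f(n)$ for all divisors $d$ is a worthwhile clarification the paper leaves unstated.
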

Now, we let $f=\1$ in the above. Then, we have: 


\begin{cor}                                                                            \label{corol2}
For any  positive real number $x>1$, we have
\begin{multline}                                      
\label{sum_thKL12}     
\sum_{k\leq x}\frac{1}{k}\sum_{j=1}^{k}\tau(\gcd(k,j)) \log j  
= \zeta(2)x\log x-2\zeta(2)x + \frac{1}{12}(\log x)^3 \\
+ \frac{\gamma-1 + \log 2\pi}{4}  (\log x)^2 + O\left((\log x)^{5/3}\right),                    
\end{multline}
where $\zeta$ be the Riemann zeta-function and $\gamma$ is the Euler constant. 
\end{cor}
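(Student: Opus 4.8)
The plan is to obtain Corollary~\ref{corol2} as the case $f=\1$ of Corollary~\ref{corol1}. The function $\1$ is completely multiplicative and, with $f=g=\1$, one has $s_k(j)=\sum_{d\mid\gcd(k,j)}\1(d)\1(k/d)=\tau(\gcd(k,j))$, so the left-hand side of \eqref{sum_thKL1} is exactly the sum appearing in \eqref{sum_thKL12}. Putting $f=\1$ on the right-hand side of \eqref{sum_thKL1} and using the elementary identities $\sigma(n)/n=\sigma_{-1}(n)$ and $l(n)=(\1*\log)(n)=\sum_{d\mid n}\log d=\tfrac12\tau(n)\log n$, the task reduces to finding asymptotics, as $x\to\infty$, for the four sums
\[
\sum_{n\le x}\sigma_{-1}(n)\log\frac ne,\qquad \sum_{n\le x}\frac{\tau(n)\log n}{n},\qquad \sum_{n\le x}\frac{\tau(n)}{n},\qquad \sum_{n\le x}\frac{\sigma_{-1}(n)}{n}.
\]

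First I would treat the three sums involving $\tau$. Starting from the Dirichlet divisor estimate $\sum_{n\le t}\tau(n)=t\log t+(2\gamma-1)t+O(t^{1/3})$, Abel summation gives $\sum_{n\le x}\tau(n)/n=\tfrac12(\log x)^2+2\gamma\log x+C_1+O(x^{-2/3})$ for some constant $C_1$, and a second summation against $\log n$ yields $\sum_{n\le x}\tau(n)\log n/n=\tfrac13(\log x)^3+\gamma(\log x)^2+O(1)$. Inserting these into the specialised \eqref{sum_thKL1} produces the cubic term $\tfrac1{12}(\log x)^3$ (from $\tfrac12\sum l(n)/n=\tfrac14\sum\tau(n)\log n/n$) and the two quadratic contributions $\tfrac\gamma4(\log x)^2$ and $\tfrac{\log2\pi}{4}(\log x)^2$; the remaining sum obeys $\sum_{n\le x}\sigma_{-1}(n)/n=\zeta(2)\log x+O(1)$ and, being $O(\log x)$, is swept into the error together with all other $O(\log x)$ remainders.

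The terms $\zeta(2)x\log x-2\zeta(2)x$ and the outstanding $-\tfrac14(\log x)^2$ then have to come from the first sum, and the crucial input is the refined asymptotic
\[
\sum_{n\le x}\sigma_{-1}(n)=\zeta(2)x-\tfrac12\log x+C_2+O\big((\log x)^{2/3}\big),
\]
whose main terms I would read off from the residues of $\zeta(s)\zeta(s+1)x^s/s$ (the double pole at $s=0$, arising from $\zeta(s+1)$ and $1/s$, supplies the $-\tfrac12\log x$). Writing $S(x)$ for this partial sum and using $\log\frac ne=\log n-1$, Abel summation turns the first sum into $\sum_{n\le x}\sigma_{-1}(n)\log n-S(x)$; after the main terms combine this equals $\zeta(2)x\log x-2\zeta(2)x-\tfrac14(\log x)^2+O((\log x)^{5/3})$. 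Collecting the three quadratic contributions gives $-\tfrac14+\tfrac\gamma4+\tfrac{\log2\pi}4=\tfrac{\gamma-1+\log2\pi}4$, exactly the coefficient in \eqref{sum_thKL12}.

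I expect the main obstacle to be the error term $O((\log x)^{2/3})$ in the estimate for $\sum_{n\le x}\sigma_{-1}(n)$. Writing $\sum_{n\le x}\sigma_{-1}(n)=x\sum_{d\le x}d^{-2}-\sum_{d\le x}\{x/d\}/d$ and $\{x/d\}=\tfrac12+\psi(x/d)$ with $\psi(t)=\{t\}-\tfrac12$, every piece is explicit except $\sum_{d\le x}\psi(x/d)/d$, which must be shown to be a constant plus $O((\log x)^{2/3})$. This needs nontrivial exponential-sum bounds (of Vinogradov type) for $\sum_{d}d^{-1}e^{2\pi i h x/d}$, and it is precisely this estimate, amplified by the factor $\log x$ in the Abel summation for the first sum, that fixes the final error as $O((\log x)^{5/3})$; every other remainder is of strictly smaller order.
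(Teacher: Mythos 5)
Your proposal is correct and follows essentially the same route as the paper: specialize Corollary~\ref{corol1} to $f=\1$, evaluate the four resulting sums, and draw the decisive terms $\zeta(2)x\log x-2\zeta(2)x-\tfrac14(\log x)^2$ from the refined estimate $\sum_{n\le x}\sigma(n)/n=\zeta(2)x-\tfrac12\log x+O((\log x)^{2/3})$ via partial summation, which is exactly the paper's argument (it cites this estimate from Kiuchi's earlier work rather than re-deriving it). Your handling of $\tfrac12\sum_{n\le x}l(n)/n$ through the identity $l(n)=\tfrac12\tau(n)\log n$ instead of the paper's double sum is a cosmetic variation that yields the same $\tfrac1{12}(\log x)^3+\tfrac{\gamma}{4}(\log x)^2$ contribution.
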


Let $\phi$  be  the Euler totient function defined by 
$
\phi=\id * \mu,   
$
and let $\phi_{a}$  be the Jordan totient function  defined by 
$
\phi_a=\mu*\id_a
$
, for any real number $a$.
In 1885,  Ces\'{a}ro~\cite{C} proved the  well-known  identity  
$$
\sum_{j=1}^{k}f(\gcd(j,k)) = (f*\phi)(k)    
$$
 for any positive integer $k$  and any arithmetical function $f$.  
 From this latter, one  can easily obtain  the formula   
\begin{equation}
\label{Cesaro}                            
Y(x;f):=\sum_{k\leq x}\frac{1}{k}\sum_{j=1}^{k}f(\gcd(j,k)) 
= \sum_{n\leq x}\frac{(f*\phi)(n)}{n},           
\end{equation}
for any positive number $x>1$.
Define  
$$
L^{}(x;f) 
: = \sum_{k\leq x} \frac{1}{k} \sum_{j=1}^{k}f(\gcd(k,j)) \log j.   
$$
The second purpose of this paper is to give an identity of $L(x;f)$. We prove that:


 \begin{thm}                                                                                        \label{th1}
Let $f$ be an arithmetical function. There is a certain positive constant $\Theta$ such that   
\begin{multline}                                   \label{sum_thL}
L(x;f) 
= \sum_{n\leq x} \frac{(f*\phi)(n)}{n}  \log \frac{n}{e} 
+  \frac{1}{2} \sum_{n\leq x}\frac{(f*\Lambda)(n)}{n}    \\
+ \log\sqrt{2\pi} \sum_{n\leq x} \frac{f(n)}{n}           
+  {\Theta} \sum_{n\leq x}\frac{(f* \phi_{-1})(n)}{n}  
\end{multline}
for any positive real number $x>1$. 
\end{thm}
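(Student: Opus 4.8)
The plan is to recognise $L(x;f)$ as a special instance of the quantity $K(x;\cdot,\cdot)$ already treated in Theorem~\ref{th0}, and then to read off \eqref{sum_thL} by simplifying the resulting Dirichlet convolutions. The starting point is the elementary identity $f=\1*(\mu*f)$, which follows from $\mu*\1=\epsilon$, the unit of Dirichlet convolution (so $\epsilon(1)=1$ and $\epsilon(n)=0$ for $n>1$), together with associativity. Writing $m=\gcd(k,j)$, this gives
\[
f(\gcd(k,j))=\sum_{d\mid m}(\mu*f)(d)=\sum_{d\mid\gcd(k,j)}(\mu*f)(d)\,\1(k/d),
\]
where the last equality is legitimate because every $d\mid\gcd(k,j)$ divides $k$, so that $\1(k/d)=1$. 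Hence $f(\gcd(k,j))$ is precisely the Anderson--Apostol sum $s_{k}(j)$ attached to the pair $(\mu*f,\,\1)$, and therefore $L(x;f)=K(x;\mu*f,\1)$.

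With this reduction in hand, I would apply Theorem~\ref{th0} with $f$ replaced by $\mu*f$ and $g$ replaced by $\1$. Each of the four convolutions appearing in \eqref{sum_thK} then collapses by associativity and commutativity of $*$: since $\1\cdot h=h$ pointwise for any $h$, the four numerators become $(\mu*f)*\id$, $(\mu*f)*\log$, $(\mu*f)*\1$, and $(\mu*f)*\id_{-1}$. Regrouping these as $f*(\mu*\id)$, $f*(\mu*\log)$, $f*(\mu*\1)$, and $f*(\mu*\id_{-1})$, and invoking the definitions $\phi=\mu*\id$, $\Lambda=\mu*\log$, $\mu*\1=\epsilon$, and $\phi_{-1}=\mu*\id_{-1}$, they simplify respectively to $f*\phi$, $f*\Lambda$, $f$, and $f*\phi_{-1}$. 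Substituting these back into the four sums of \eqref{sum_thK} reproduces exactly the right-hand side of \eqref{sum_thL}, with the constant $\Theta$ carried over unchanged from Theorem~\ref{th0}.

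There is essentially no analytic difficulty here, since the identity is purely formal once Theorem~\ref{th0} is granted; the one point requiring care is the verification that the $\gcd$-sum function genuinely admits the Anderson--Apostol representation above, that is, that choosing the second function to be $\1$ (rather than some $g$ coupled to the argument $k/d$) exactly recovers the summatory identity $\1*(\mu*f)=f$. I expect this bookkeeping step, tracking which factor plays the role of $f$ and which the role of $g$ in $s_{k}(j)$, and confirming that the pointwise products with $\id$, $\log$, and $\id_{-1}$ interact correctly with the convolutions, to be the only place where an error could slip in.
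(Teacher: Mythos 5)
Your proposal is correct and follows essentially the route the paper intends: the reduction $L(x;f)=K(x;\mu*f,\1)$ via $f=\1*(\mu*f)$, followed by the substitution into \eqref{sum_thK} and the simplifications $(\mu*f)*\id=f*\phi$, $(\mu*f)*\log=f*\Lambda$, $(\mu*f)*\1=f$, $(\mu*f)*\id_{-1}=f*\phi_{-1}$, is exactly the specialization of the Anderson--Apostol identity \eqref{KMU} (with Stirling's formula) that underlies Theorem~\ref{th0}, and all the convolution bookkeeping in your argument checks out.
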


\begin{rem}                                                                                     \label{rem1}
Suppose that  the first term on the right-hand side  of (\ref{sum_thL}) is  the main term.  
Using \eqref{Cesaro} and  the partial summation,  we  have  the relation  
$$
\sum_{n\leq x} \frac{(f*\phi)(n)}{n}  \log \frac{n}{e} 
= Y(x;f)\log\frac{x}{e} - \int_{1}^{x}Y(u;f)\frac{du}{u}.
$$
 Then  the   order of magnitude for the function
$L(x;f)$  may be regarded as 
$$
L(x;f) \asymp Y(x;f)\log\frac{x}{e} - \int_{1}^{x}Y(u;f)\frac{d u}{u}.
$$
\end{rem}
Replacing $f$ into  \eqref{sum_thL}  by $f*\1$  and  then by $f*\tau$,   
we obtain the following asymptotic formulas,  namely


\begin{cor}                                                                           \label{corol3}
We have 
\begin{multline}                                                                      \label{sum_thL1}
L(x;f*\1) 
= \sum_{n\leq x} \frac{(f*\id)(n)}{n}  \log \frac{n}{e} 
+  \frac{1}{2} \sum_{n\leq x}\frac{(f*\log)(n)}{n} \\
+ \log\sqrt{2\pi} \sum_{n\leq x} \frac{(f*\1)(n)}{n}           
+  {\Theta} \sum_{n\leq x}\frac{(f* \id_{-1})(n)}{n},   
\end{multline}
and  
\begin{multline}                                                                   \label{sum_thL2}
L(x;{f*\tau}) 
= \sum_{n\leq x} \frac{(f*\sigma)(n)}{n}  \log \frac{n}{e} 
+  \frac{1}{2} \sum_{n\leq x}\frac{(f*l)(n)}{n} \\
+ \log\sqrt{2\pi} \sum_{n\leq x} \frac{(f*\tau)(n)}{n}           
+  {\Theta} \sum_{n\leq x}\frac{(f*\sigma_{-1})(n)}{n},    
\end{multline}
where $l=\1*\log$.   
\end{cor}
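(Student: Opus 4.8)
The plan is to derive Corollary~\ref{corol3} directly from Theorem~\ref{th1} by the substitution trick already announced in the statement: replace the arithmetical function $f$ in \eqref{sum_thL} first by $f*\1$ and then by $f*\tau$. The only facts needed are the associativity and commutativity of Dirichlet convolution together with a handful of standard convolution identities, so no genuinely new analytic input is required. The main obstacle is purely bookkeeping: I must verify that each of the four convolution factors appearing in \eqref{sum_thL} simplifies correctly under the substitution.

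For the first formula \eqref{sum_thL1}, I would set $h:=f*\1$ and insert $h$ into each term of \eqref{sum_thL}. In the first term the factor becomes $(h*\phi)(n)=(f*\1*\phi)(n)$, and since $\1*\phi=\id$ (because $\phi=\id*\mu$ gives $\1*\phi=\1*\id*\mu=\id*(\1*\mu)=\id*\epsilon=\id$, using $\1*\mu=\epsilon$ the convolution identity), this collapses to $(f*\id)(n)$. In the second term I use $h*\Lambda=f*\1*\Lambda=f*(\1*\Lambda)$; since $\1*\Lambda=\log$ (this is the defining relation $\Lambda=\mu*\log$ convolved with $\1$, giving $\1*\Lambda=\1*\mu*\log=\epsilon*\log=\log$), it becomes $(f*\log)(n)$. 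The third term's factor $h(n)=(f*\1)(n)$ needs no simplification beyond recording it as $(f*\1)(n)$. For the last term I use $h*\phi_{-1}=f*\1*\phi_{-1}$; since $\phi_{-1}=\mu*\id_{-1}$ one gets $\1*\phi_{-1}=\1*\mu*\id_{-1}=\epsilon*\id_{-1}=\id_{-1}$, so the factor becomes $(f*\id_{-1})(n)$. Assembling these four simplified terms reproduces \eqref{sum_thL1} exactly.

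For the second formula \eqref{sum_thL2}, the argument is identical in structure but with $f*\1$ replaced by $f*\tau$, and I would exploit $\tau=\1*\1$ throughout. In the first term $(f*\tau*\phi)(n)=(f*\1*\1*\phi)(n)=(f*\1*\id)(n)=(f*\sigma)(n)$, using $\1*\id=\sigma$. In the second term $f*\tau*\Lambda=f*\1*(\1*\Lambda)=f*\1*\log=f*l$ by the definition $l=\1*\log$ stated in the corollary. The third term is simply $(f*\tau)(n)$, unchanged. In the last term $f*\tau*\phi_{-1}=f*\1*(\1*\phi_{-1})=f*\1*\id_{-1}=f*\sigma_{-1}$, where $\sigma_{-1}=\1*\id_{-1}$ by the definition $\sigma_a=\1*\id_a$ given before Corollary~\ref{corol1}.

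Since the constant $\Theta$ of Theorem~\ref{th1} is absolute and does not depend on the input function $f$, it carries through both substitutions unchanged, so the same $\Theta$ appears in \eqref{sum_thL1} and \eqref{sum_thL2}. I expect the only point requiring care to be the systematic use of the identities $\1*\mu=\epsilon$, $\1*\id=\sigma$, and $\1*\log=l$, together with keeping the four terms in the correct order; once these are in hand the two formulas follow immediately from Theorem~\ref{th1} with no estimation whatsoever.
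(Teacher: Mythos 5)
Your proposal is correct and is exactly the argument the paper intends: Corollary~\ref{corol3} is obtained by substituting $f*\1$ and $f*\tau$ into \eqref{sum_thL} and simplifying each of the four convolution factors via $\1*\phi=\id$, $\1*\Lambda=\log$, $\1*\phi_{-1}=\id_{-1}$, and $\tau=\1*\1$. The paper gives no further detail beyond announcing this substitution, so your verification of the individual identities supplies precisely the omitted bookkeeping.
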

The proof of the theorems above are  not difficult, 
but the feature of them is  that they provide many interesting and useful formulas, which are given in Section~\ref{section2}. In Section~\ref{section3}, we establish some formulas of Dirichlet series having coefficients with partial sums for weighted averages of $s_k(j).$

\section{Applications of Theorems~\ref{th0} and \ref{th1}}
\label{section2}
Taking  $f=\id$ and $g=\mu$ in~\eqref{sum_thK}, one can obtain   
$$ 
K(x;\id,\mu) =\sum_{k\leq x}\frac{1}{k}\sum_{J=1}^{k}c_{k}(j)\log j. 
$$
In this section, we show that


\begin{thm}                                                                                \label{corol0}
Under the hypotheses of Theorem \ref{th0}, we have
\begin{align}                                                                            \label{sum_coroK}
\sum_{k\leq x}\frac{1}{k}\sum_{J=1}^{k}c_{k}(j)\log j  
&=  \left(\frac{\log\sqrt{2\pi}}{\zeta(2)} + \frac{\zeta'(2)}{2\zeta^2(2)}  + \frac{\Theta}{\zeta(3)}\right)x 
+   O\left((\log x)^2\right).
\end{align}
\end{thm}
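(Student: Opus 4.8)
The plan is to recognise the left-hand side as a special value of $K(x;f,g)$ and then feed it into Theorem~\ref{th0}. Since $c_k(j)=\sum_{d\mid\gcd(k,j)}d\,\mu(k/d)=s_k(j)$ precisely for the choice $f=\id$ and $g=\mu$, we have $\sum_{k\le x}\frac1k\sum_{j=1}^k c_k(j)\log j=K(x;\id,\mu)$, and \eqref{sum_thK} writes this as a sum of four pieces. With $g=\mu$ the four arithmetical functions occurring there are $\id*(\mu\cdot\id)$, $\id*(\mu\cdot\log)$, $\id*\mu$ and $\id*(\mu\cdot\id_{-1})$, so the whole problem reduces to estimating $\sum_{n\le x}h(n)/n$ (with a $\log(n/e)$ weight on the first piece) for each of these $h$.

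First I would simplify the convolutions. A direct computation gives $(\id*(\mu\cdot\id))(n)=n\sum_{d\mid n}\mu(n/d)$, which equals $1$ for $n=1$ and $0$ otherwise; hence the first term of \eqref{sum_thK} collapses to $\log(1/e)=-1=O(1)$. Next $\id*\mu=\phi$, so the third term is $\log\sqrt{2\pi}\sum_{n\le x}\phi(n)/n$. For the fourth term, $(\id*(\mu\cdot\id_{-1}))(n)=\frac1n\sum_{d\mid n}d^2\mu(n/d)=\phi_2(n)/n$, so that term equals $\Theta\sum_{n\le x}\phi_2(n)/n^2$. The second term retains the factor $\mu\cdot\log$ and will be treated by interchanging the order of summation.

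Then I would estimate the three surviving sums by the elementary divisor-swapping (hyperbola) method, expressing each $h(n)/n$ as a Dirichlet convolution and summing the inner variable. For the third term one uses $\phi(n)/n=\sum_{d\mid n}\mu(d)/d$, giving $\sum_{n\le x}\phi(n)/n=x\sum_{d\le x}\mu(d)/d^2+O(\log x)=x/\zeta(2)+O(\log x)$, hence a contribution $\frac{\log\sqrt{2\pi}}{\zeta(2)}x+O(\log x)$. For the fourth term, $\phi_2(n)/n^2=\sum_{d\mid n}\mu(d)/d^2$ yields $\sum_{n\le x}\phi_2(n)/n^2=x\sum_{d\le x}\mu(d)/d^3+O(1)=x/\zeta(3)+O(1)$, contributing $\frac{\Theta}{\zeta(3)}x+O(1)$. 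For the second term, writing $n=dm$ gives $\sum_{n\le x}(\id*(\mu\cdot\log))(n)/n=\sum_{m\le x}\frac{\mu(m)\log m}{m}\lfloor x/m\rfloor$; extracting the main term produces $x\sum_{m\le x}\frac{\mu(m)\log m}{m^2}=x\,\frac{\zeta'(2)}{\zeta^2(2)}+O(\log x)$, using $\sum_m\mu(m)\log m\,m^{-s}=\zeta'(s)/\zeta^2(s)$ evaluated at $s=2$ together with the tail bound $\sum_{m>x}\log m\,m^{-2}=O((\log x)/x)$. Collecting the three linear contributions (and absorbing the $O(1)$ first term) gives exactly the coefficient of $x$ in \eqref{sum_coroK}.

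These computations are routine; the only point requiring real care is the error analysis of the second term, where replacing $\lfloor x/m\rfloor$ by $x/m$ costs $O\!\big(\sum_{m\le x}\frac{\log m}{m}\big)=O((\log x)^2)$. This is the largest of all the error terms and therefore pins the overall remainder at $O((\log x)^2)$, as claimed.
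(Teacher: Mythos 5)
Your proposal is correct and follows essentially the same route as the paper: substitute $f=\id$, $g=\mu$ into \eqref{sum_thK}, observe that the first term collapses because $\id*(\mu\cdot\id)$ is supported at $n=1$, identify the third term as $\log\sqrt{2\pi}\sum_{n\le x}\phi(n)/n$, and evaluate the second and fourth terms by swapping the order of summation, with the second term's error $O\bigl(\sum_{d\le x}(\log d)/d\bigr)=O((\log x)^2)$ dominating. The only cosmetic differences are your use of the elementary bound $O(\log x)$ for $\sum_{n\le x}\phi(n)/n - x/\zeta(2)$ in place of the paper's citation of the sharper $(\log x)^{2/3}(\log\log x)^{4/3}$ estimate, and your identification of $\id*(\mu\cdot\id_{-1})$ with $\phi_2/\id$; neither affects the final remainder.
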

We recall that 
$$
\sum_{n\leq x}\tau(n) = x\log x + \left(2\gamma -1\right)x + \Delta(x)  
$$
with the error term  $\Delta(x)=O\left(x^{\theta+\epsilon}\right)$ for any small number  $\epsilon>0$,
and  $1/4\leq \theta \leq 131/416$. The upper bound has been given by Huxley \cite{H}. 
It is known that the inequality 
\begin{align}                                                                        \label{Voronoi}
\int_{1}^{X}\Delta(y)dy &= O\left(X\right) 
\end{align} 
holds for any positive number $X$.
We shall provide applications of Theorem~\ref{th1}, 
for various multiplicative functions such as $f=\id$,  $\phi$,  $\id_{1+a}$ and $\phi_{1+a}$.  
Other functions as $\psi, \psi_{1+a}, \phi^2$ and $\psi^2$, where $\psi$ and $\psi_{1+a}$ denote the Dedekind function and its generalization respectively, can be considered too.\\
We prove that: 

\begin{thm}                                                                             \label{th4}
Let the notation be as above.  
For any  sufficiently large  positive  number $x>1$, we have  
\begin{align}                                                                 \label{sum_cor11}
L(x;\id) 
&= \frac{1}{\zeta(2)}x(\log x)^2
+ \frac{1}{\zeta(2)}\left(2\gamma - 3 -\frac{\zeta'(2)}{\zeta(2)}\right)x\log x \nonumber \\
&- \frac{1}{\zeta(2)}\left(4\gamma -3 -2\frac{\zeta'(2)}{\zeta(2)} 
+ \frac{\zeta'(2)}{2} - {\Theta\zeta(3)} - \zeta(2)\log\sqrt{2\pi} \right)x    \nonumber \\ 
& + \sum_{n\leq x}\frac{\mu(n)}{n}\Delta\left(\frac{x}{n}\right)\log\frac{x}{e} + O\left((\log x)^2\right),    
\end{align}
and 
\begin{align}                                                                   \label{sum_cor12}
L(x;\phi) 
&= \frac{1}{\zeta^{2}(2)}x(\log x)^2 
+ \frac{1}{\zeta^{2}(2)}\left(2\gamma - 3 -2\frac{\zeta'(2)}{\zeta(2)}\right)x\log x \nonumber \\
&- \frac{1}{\zeta^{2}(2)}\left(4\gamma -3 -4\frac{\zeta'(2)}{\zeta(2)}  
+ \frac{\zeta'(2)}{2} - {\Theta\zeta(3)}  - \zeta(2)\log\sqrt{2\pi}\right)x    \nonumber \\
& + \sum_{n\leq x}\frac{(\mu*\mu)(n)}{n}\Delta\left(\frac{x}{n}\right)\log\frac{x}{e} 
+ O\left((\log x)^3\right).     
\end{align}
\end{thm}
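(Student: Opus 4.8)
The plan is to specialize Theorem~\ref{th1} to $f=\id$ and to $f=\phi$ and to evaluate the four resulting sums; the crucial point is that dividing $(f*\phi)(n)$ by $n$ turns the leading sum into one built from the divisor function $\tau$, which is precisely what injects the Dirichlet error term $\Delta$. First I would record the convolution identities coming from division by $n$. Since $\phi(m)/m=(\1*\tfrac{\mu}{\id})(m)$ and $\id$ is completely multiplicative (so that $\tfrac{\mu}{\id}*\tfrac{\mu}{\id}=\tfrac{\mu*\mu}{\id}$), one has
\[
\frac{(\id*\phi)(n)}{n}=\left(\tau*\frac{\mu}{\id}\right)(n),\qquad
\frac{(\phi*\phi)(n)}{n}=\left(\tau*\frac{\mu*\mu}{\id}\right)(n).
\]
In the same way $\tfrac{1}{n}(f*\Lambda)(n)=(\tfrac{f}{\id}*\tfrac{\Lambda}{\id})(n)$ and $\tfrac{1}{n}(f*\phi_{-1})(n)=(\tfrac{f}{\id}*\tfrac{\phi_{-1}}{\id})(n)$, so after interchanging the order of summation every sum in \eqref{sum_thL} takes the shape $\sum_{d\le x}\frac{g(d)}{d}\sum_{m\le x/d}(\cdots)$.

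For the leading sum I would invoke Remark~\ref{rem1}: writing $Y(x;f)=\sum_{n\le x}(f*\phi)(n)/n$, the first term of \eqref{sum_thL} equals $Y(x;f)\log\frac{x}{e}-\int_1^xY(u;f)\frac{du}{u}$. Inserting $\sum_{m\le y}\tau(m)=y\log y+(2\gamma-1)y+\Delta(y)$ into $Y(x;f)=\sum_{d\le x}\frac{g(d)}{d}\sum_{m\le x/d}\tau(m)$ (with $g=\mu$ for $f=\id$ and $g=\mu*\mu$ for $f=\phi$) and using the Dirichlet-series values $\sum_d\mu(d)d^{-2}=1/\zeta(2)$, $\sum_d\mu(d)(\log d)d^{-2}=\zeta'(2)/\zeta(2)^2$, respectively $\sum_d(\mu*\mu)(d)d^{-2}=1/\zeta(2)^2$ and $\sum_d(\mu*\mu)(d)(\log d)d^{-2}=2\zeta'(2)/\zeta(2)^3$, produces $Y(x;f)=M(x)+E(x)+R(x)$, with a linear-times-log main term $M$, the oscillating remainder $E(x)=\sum_{d\le x}\frac{g(d)}{d}\Delta(x/d)$, and a negligible $R$. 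The polynomial part of the first term then follows from the elementary $\int_1^x(\log u+\text{const})\,du$, and it reproduces the $x(\log x)^2$, $x\log x$ and part of the $x$ coefficients exactly as written in \eqref{sum_cor11} and \eqref{sum_cor12}.

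The three remaining sums contribute only to the coefficient of $x$. Interchanging summation and using $\sum_{a\le y}1=y+O(1)$ for $f=\id$, respectively $\sum_{a\le y}\phi(a)/a=y/\zeta(2)+O(\log y)$ for $f=\phi$ (this extra factor $1/\zeta(2)$ is what turns the denominator $\zeta(2)$ of \eqref{sum_cor11} into $\zeta(2)^2$ in \eqref{sum_cor12}), together with $\sum_b\Lambda(b)b^{-2}=-\zeta'(2)/\zeta(2)$ and $\sum_b\phi_{-1}(b)b^{-2}=\zeta(3)/\zeta(2)$, yields the $\tfrac{1}{2}\zeta'(2)$, $\Theta\zeta(3)$ and $\zeta(2)\log\sqrt{2\pi}$ pieces inside the bracketed coefficient of $x$; the elementary bound $|\phi_{-1}(b)|\le1$ keeps the corresponding tails under control.

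The main obstacle, which also explains both the explicit $\Delta$-term and the two different error exponents, is the $\Delta$-contribution to the first term. The boundary piece $E(x)\log\frac{x}{e}$ cannot be simplified and is kept as the stated sum $\sum_{n\le x}\frac{\mu(n)}{n}\Delta(x/n)\log\frac{x}{e}$, respectively its analogue with $\mu*\mu$. For the integral I would set $v=u/d$ to rewrite $\int_1^xE(u)\frac{du}{u}=\sum_{d\le x}\frac{g(d)}{d}\int_1^{x/d}\frac{\Delta(v)}{v}\,dv$, and then bound $\int_1^X\frac{\Delta(v)}{v}\,dv=O(\log X)$ by partial summation from the first-moment estimate \eqref{Voronoi}. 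Summing the weight gives $O\!\left(\log x\sum_{d\le x}|g(d)|/d\right)$, which is $O((\log x)^2)$ for $g=\mu$ but $O((\log x)^3)$ for $g=\mu*\mu$, since $\sum_{d\le x}\tau(d)/d\asymp(\log x)^2$; this is exactly the discrepancy between the error terms of \eqref{sum_cor11} and \eqref{sum_cor12}. Collecting the contributions of all four sums completes the proof.
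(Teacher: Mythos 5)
Your proposal is correct and follows essentially the same route as the paper: specialize Theorem~\ref{th1}, apply partial summation to the leading sum $\sum_{n\le x}(f*\phi)(n)n^{-1}\log(n/e)$, and control the resulting $\Delta$-terms via the first-moment bound \eqref{Voronoi}, with the remaining three sums handled by interchanging summation exactly as in the paper's $I_{1,2},I_{1,3},I_{1,4}$ and $I_{2,2},I_{2,3},I_{2,4}$. The only difference is cosmetic: you rederive the asymptotics for $\sum_{n\le x}(\id*\phi)(n)/n$ and $\sum_{n\le x}(\phi*\phi)(n)/n$ directly from the Dirichlet divisor formula, whereas the paper quotes identities (2.10) and (2.11) of \cite{K1}; your error terms, though occasionally weaker (e.g.\ $O(\log y)$ for $\sum_{a\le y}\phi(a)/a$), are still absorbed by the stated $O((\log x)^2)$ and $O((\log x)^3)$ bounds.
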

For $-1<a<0$, we recall that 
$$
\sum_{n\leq x}\sigma_{a}(n) = \zeta(1-a)x + \frac{\zeta(1+a)}{1+a}x^{1+a} - \frac{\zeta(-a)}{2}  + \Delta_{a}(x). 
$$
Here the error term $\Delta_{a}(x)$ is given by
\begin{equation}                                                                              \label{eq001} 
\Delta_{a}(x) = 
\frac{x^{\frac14 +\frac{a}{2}}}{\pi\sqrt{2}}
\sum_{n\leq N}\frac{\sigma_{a}(n)}{n^{\frac{3}{4}+\frac{a}{2}}}\cos\left(4\pi\sqrt{nx}-\frac{\pi}{4}\right)
+ O\left(x^{\frac12 + \varepsilon}N^{-\frac12}\right),  
\end{equation}
for  $1\ll N\ll x$, see~\cite[Eq. (64)]{KS}.  From this latter,  we can see that
\begin{equation}                                                                          
\label{Voronoia}
\int_{1}^{X}\Delta_{a}(y)dy = O_{a}\left(X^{3/4+a/2}\right),
\end{equation} 
for any positive number $X$.
\begin{thm}                                                                                 \label{th5}
Let the notation be as above.  
For any  sufficiently large  positive  number $x>1$  and $-1<a<0$, we have  
\begin{align}                                                                 \label{sum_cor21}
L(x;\id_{1+a}) 
&=  \frac{\zeta(1-a)}{\zeta(2)}x\log^{}x 
-  \frac{2\zeta(1-a)}{\zeta(2)} x 
+  \frac{\zeta(1+a)}{(1+a)\zeta(2+a)} x^{1+a}\log x  \nonumber \\
&- \frac{1}{(1+a)\zeta(2+a)}\left(\frac{(2+a)\zeta(1+a)}{1+a} 
 + \frac{\zeta'(2+a)}{2} 
 - {\Theta\zeta(3+a)}\right)x^{1+a}         \nonumber \\
& + \frac{\log\sqrt{2\pi}}{1+a}x^{1+a} 
+  \sum_{n\leq x}\frac{\mu(n)}{n}\Delta_{a}\left(\frac{x}{n}\right)\log\frac{x}{e} 
+  O_{a}\left(\log x\right),     
\end{align}
and 
\begin{align}                                                                       
\label{sum_cor22}
L(x;\phi_{1+a}) 
&= \frac{\zeta(1-a)}{\zeta^2(2)}x\log^{}x 
-  \frac{2\zeta(1-a)}{\zeta^2(2)} x 
+  \frac{\zeta(1+a)}{(1+a)\zeta^2(2+a)} x^{1+a}\log x  \nonumber \\
&- \frac{1}{(1+a)\zeta^2(2+a)}\left(\frac{(2+a)\zeta(1+a)}{(1+a)} 
                                + \frac{\zeta'(2+a)}{2} 
 - {\Theta\zeta(3+a)}\right)x^{1+a}         \nonumber \\
& + \frac{\log\sqrt{2\pi}}{(1+a)\zeta(2+a)}x^{1+a} 
+  \sum_{n\leq x}\frac{(\mu*\mu)(n)}{n}\Delta_{a}\left(\frac{x}{n}\right)\log\frac{x}{e} 
+  O_{a}\left((\log x)^3\right).     
\end{align}
\end{thm}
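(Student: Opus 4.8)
The plan is to derive both formulas directly from Theorem~\ref{th1} by specializing $f$ and evaluating the four Dirichlet-weighted sums on the right-hand side of \eqref{sum_thL}. I would treat $f=\id_{1+a}$ first; the case $f=\phi_{1+a}$ is identical once $f$ is replaced by $\mu*\id_{1+a}$, which inserts one extra factor $\mu$ into every convolution and thereby turns each $\zeta(2)$, $\zeta(2+a)$, $\zeta(3+a)$ into $\zeta^2(2)$, $\zeta^2(2+a)$, $\zeta^2(2+a)$. The computational backbone is the convolution identity $\id_{1+a}*\id=\id\cdot\sigma_a$, valid since $\sum_{d\mid n}d^{1+a}(n/d)=n\sum_{d\mid n}d^{a}=n\sigma_a(n)$; combined with $\phi=\id*\mu$ this gives $\id_{1+a}*\phi=\mu*(\id\cdot\sigma_a)$, which is exactly what lets the known mean value of $\sigma_a$, with its error term $\Delta_a$ from \eqref{eq001}, enter the leading sum.

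Next I would evaluate the four sums. For the first (main) sum I use the hyperbola method to write $\sum_{n\le x}\frac{(\id_{1+a}*\phi)(n)}{n}=\sum_{d\le x}\frac{\mu(d)}{d}\sum_{e\le x/d}\sigma_a(e)$ and insert $\sum_{e\le y}\sigma_a(e)=\zeta(1-a)y+\frac{\zeta(1+a)}{1+a}y^{1+a}-\frac{\zeta(-a)}{2}+\Delta_a(y)$. Completing the $d$-sums to infinity (the tails are absorbed because $\sum_d\mu(d)d^{-2}=1/\zeta(2)$ and $\sum_d\mu(d)d^{-2-a}=1/\zeta(2+a)$ with power-saving remainders) produces the two main powers $\frac{\zeta(1-a)}{\zeta(2)}x$ and $\frac{\zeta(1+a)}{(1+a)\zeta(2+a)}x^{1+a}$ together with the oscillatory term $\sum_{n\le x}\frac{\mu(n)}{n}\Delta_a(x/n)$. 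The remaining three sums I read off from their Dirichlet series: since the weight $1/n$ shifts the Dirichlet series of $g$ to its value at $s+1$, the series attached to $\sum_{n\le x}\frac{g(n)}{n}$ for $g=\id_{1+a}*\Lambda,\ \id_{1+a},\ \id_{1+a}*\phi_{-1}$ are respectively $\zeta(s-a)(-\zeta'(s+1)/\zeta(s+1))$, $\zeta(s-a)$ and $\zeta(s-a)\zeta(s+2)/\zeta(s+1)$, each with a simple pole at $s=1+a$ whose residue contributes exactly the coefficients $-\frac{\zeta'(2+a)}{2(1+a)\zeta(2+a)}$, $\frac{\log\sqrt{2\pi}}{1+a}$ and $\frac{\Theta\zeta(3+a)}{(1+a)\zeta(2+a)}$ of $x^{1+a}$; using $|\phi_{-1}(m)|\le1$ and Mertens-type bounds one checks the non-principal parts here are $O_a(\log x)$.

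It then remains to restore the weight $\log\frac{n}{e}$ on the first sum of \eqref{sum_thL}. Following Remark~\ref{rem1} I apply partial summation, $\sum_{n\le x}\frac{(\id_{1+a}*\phi)(n)}{n}\log\frac{n}{e}=Y(x;\id_{1+a})\log\frac{x}{e}-\int_1^xY(u;\id_{1+a})\frac{du}{u}$, with $Y$ as in \eqref{Cesaro}. Substituting $Y(u;\id_{1+a})=\frac{\zeta(1-a)}{\zeta(2)}u+\frac{\zeta(1+a)}{(1+a)\zeta(2+a)}u^{1+a}+\sum_{n\le u}\frac{\mu(n)}{n}\Delta_a(u/n)+O(1)$ converts the algebraic powers into the announced $x\log x$, $-2x$, $x^{1+a}\log x$ and $-\frac{(2+a)\zeta(1+a)}{(1+a)^2\zeta(2+a)}x^{1+a}$ terms (the last from $1+\frac{1}{1+a}=\frac{2+a}{1+a}$), and leaves $\sum_{n\le x}\frac{\mu(n)}{n}\Delta_a(x/n)\log\frac{x}{e}$ intact. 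The auxiliary integral of the $\Delta_a$-part is controlled by \eqref{Voronoia}: interchanging summation and integration and substituting $v=u/d$ produces $\int_1^{x/d}\frac{\Delta_a(v)}{v}\,dv$, which integration by parts bounds by $O_a(1)$ because $\tfrac34+\tfrac a2<1$, and summing the resulting $\mu(d)/d$ gives $O_a(\log x)$. For $f=\phi_{1+a}$ the same steps apply verbatim with $\mu$ replaced by $\mu*\mu$; the only quantitative change is that the auxiliary sums now carry the divisor-type weight $|(\mu*\mu)(n)|\le\tau(n)$, so that $\sum_{n\le x}\frac{\tau(n)}{n}\ll(\log x)^2$ and the extra factor $\log\frac{x}{e}$ raise the error to $O_a((\log x)^3)$.

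The step I expect to be the main obstacle is the bookkeeping of the $\log\frac{n}{e}$-weighting: one must verify that partial summation reproduces the precise rational combinations of $\zeta$-values multiplying $x$, $x\log x$, $x^{1+a}$ and $x^{1+a}\log x$, while confirming that the secondary integral $\int_1^x Y(u)\,du/u$ contributes only to the stated lower-order terms and that the oscillatory sum emerges with exactly the factor $\log\frac{x}{e}$ and no spurious companion. Everything else is a routine, if lengthy, matter of completing Dirichlet-series tails and invoking \eqref{Voronoia}.
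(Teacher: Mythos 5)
Your proposal is correct and follows essentially the same route as the paper: specialize Theorem~\ref{th1}, reduce the leading sum to the mean value of $\sigma_a$ via $\id_{1+a}*\phi=\mu*(\id\cdot\sigma_a)$ (resp.\ with an extra $\mu$ for $\phi_{1+a}$), restore the $\log\frac{n}{e}$ weight by partial summation, control the $\Delta_a$-integrals with \eqref{Voronoia}, and evaluate the three secondary sums to extract the $x^{1+a}$ coefficients. The only cosmetic difference is that you package those secondary evaluations as residues of shifted Dirichlet series where the paper performs the equivalent elementary convolution estimates directly; the coefficients and error terms you obtain agree with \eqref{sum_cor21} and \eqref{sum_cor22}.
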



\begin{rem}  
From Theorems \ref{th4} and \ref{th5}, we deduce that 
$$
\lim_{x\to \infty}\frac{L^{}(x;\id)}{x\log^{2}x}=\frac{1}{\zeta(2)}  \qquad  {\rm and} \qquad  
 \lim_{x\to \infty}\frac{L^{}(x;\phi)}{x\log^{2}x}=\frac{1}{\zeta^{2}(2)}, 
$$
and that 
$$
\lim_{x\to \infty}\frac{L^{}(x;\id_{1+a})}{x\log^{}x}=\frac{\zeta(1-a)}{\zeta(2)}  
\qquad {\rm and} \qquad  
 \lim_{x\to \infty}\frac{L^{}(x;\phi_{1+a})}{x\log^{}x}=\frac{\zeta(1-a)}{\zeta^{2}(2)}.  
$$
\end{rem}
\section{Dirichlet series}
\label{section3}
Given two functions  $F(s)$ and $G(s)$  represented by  Dirichlet series as follows:   
$$
F(s)=\sum_{k=1}^{\infty}\frac{f(k)}{k^s} \qquad \Re (s)>\sigma_1, 
$$
and 
$$
G(s)=\sum_{k=1}^{\infty}\frac{g(k)}{k^s} \qquad \Re (s)>\sigma_2,
$$
which converge   absolutely in the  half-plane $\Re (s)>\sigma_1$ and $\Re (s)>\sigma_2$ respectively.   
The Dirichlet series of the first derivative of $F(s)$ and $G(s)$, 
with respect to $s$,  are given by 
$$
F'(s)=- \sum_{k=1}^{\infty}\frac{f(k)}{k^s}\log k  \qquad \Re (s)>\sigma_1,
$$
and 
$$
G'(s)=- \sum_{k=1}^{\infty}\frac{g(k)}{k^s}\log k   \qquad \Re (s)>\sigma_2.  
$$
We  shall consider  the relationship between Dirichlet series having the coefficients with partial sums for weighted averages of $s_{k}(j)$  and two  Dirichlet series $F(s)$ and $G(s)$.
Define 
$$
u_{f,g}(k) := \sum_{j=1}^{k}s_{k}(j) \log j 
$$
for any positive integer $k$.   
Then, the Dirichlet series having the coefficients  $u_{f,g}(k)$  is defined by  
$$
U_{f,g}(s) := \sum_{k=1}^{\infty}\frac{u_{f,g}(k)}{k^s},   
$$
which  
converges absolutely in the region  $\Re (s)>\alpha$. From \eqref{KMU}, we easily  deduce  that:   


\begin{thm}                                                                                        \label{th7}
Let the notation be as above. Then  we  have
\begin{align}                                                                                  \label{sum_th7}
U_{f,g}(s) 
=  -F'(s)G(s-1) + F(s)G_{L}(s), 
\end{align}
where $G_{L}(s)$ is defined by  
$$
G_{L}(s):=\sum_{k=1}^{\infty}\frac{g(k)L(k)}{k^s}, 
$$
and converges absolutely  in the  half-plane  $\Re (s)>\sigma_2 +1$.   
\end{thm}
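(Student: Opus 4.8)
The plan is to read the identity off directly from the convolution formula \eqref{KMU} by invoking the standard fact that the generating Dirichlet series of a Dirichlet convolution factors as the product of the individual Dirichlet series. First I would recall from \eqref{KMU} that
$$
u_{f,g}(k) = (f\cdot\log * g\cdot\id)(k) + (f * g\cdot L)(k),
$$
so that, by linearity of $\sum_{k\geq 1}(\cdot)/k^s$, the series $U_{f,g}(s)$ splits as the sum of the Dirichlet series attached to the two convolutions above. It then remains only to identify each of the four factors.

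Next I would compute each factor separately in its half-plane of absolute convergence. The Dirichlet series of $f\cdot\log$ is $\sum_{k\geq 1}\frac{f(k)\log k}{k^s} = -F'(s)$, obtained by termwise differentiation of $F(s)$, which is legitimate for $\Re(s)>\sigma_1$. The Dirichlet series of $g\cdot\id$ is $\sum_{k\geq 1}\frac{kg(k)}{k^s} = \sum_{k\geq 1}\frac{g(k)}{k^{s-1}} = G(s-1)$, convergent absolutely for $\Re(s-1)>\sigma_2$, i.e. $\Re(s)>\sigma_2+1$. The Dirichlet series of $f$ is $F(s)$ itself, and the Dirichlet series of $g\cdot L$ is $G_L(s)$ by its very definition. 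Applying the multiplicativity relation $\sum_{k\geq 1}\frac{(a*b)(k)}{k^s}=A(s)B(s)$, valid in any common half-plane of absolute convergence of $A$ and $B$, to the two convolutions yields
$$
U_{f,g}(s) = \bigl(-F'(s)\bigr)G(s-1) + F(s)\,G_L(s),
$$
which is exactly \eqref{sum_th7}.

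The only genuine point requiring care is the abscissa of absolute convergence, so that the interchange of the double summation implicit in the convolution rearrangement is justified and the stated half-plane $\Re(s)>\sigma_2+1$ for $G_L(s)$ is correct. Here one uses that $L(k)=\sum_{m=1}^{k}\log m = \log k!$, and Stirling's formula gives $L(k)=k\log k - k + O(\log k)$, so in particular $L(k)\ll k^{1+\epsilon}$. Consequently $\frac{g(k)L(k)}{k^s}$ is dominated by a constant times $\frac{|g(k)|\log k}{k^{\Re(s)-1}}$, which shows that $G_L(s)$ converges absolutely precisely for $\Re(s)>\sigma_2+1$, matching the claim; the extra unit shift in the abscissa, coming from the linear growth of $L$, is the reason this half-plane is worse than that of $G(s)$. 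Collecting the constraints, $-F'(s)$ and $F(s)$ converge absolutely for $\Re(s)>\sigma_1$, while $G(s-1)$ and $G_L(s)$ converge absolutely for $\Re(s)>\sigma_2+1$, so the whole identity holds for $\Re(s)>\max(\sigma_1,\sigma_2+1)=\alpha$, and the proof is complete.
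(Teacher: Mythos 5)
Your proposal is correct and follows exactly the route the paper takes: the identity is read off from \eqref{KMU} by applying the product rule for Dirichlet series of convolutions, identifying the factors $-F'(s)$, $G(s-1)$, $F(s)$, $G_L(s)$, and using $L(k)=\log k!\ll k^{1+\epsilon}$ to place $G_L$ in the half-plane $\Re(s)>\sigma_2+1$. The only cosmetic caveat is that the convergence of $G_L(s)$ holds \emph{at least} for $\Re(s)>\sigma_2+1$ rather than ``precisely'' there, since $\sigma_2$ need not be the exact abscissa of absolute convergence of $G$.
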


Here is an application of  \eqref{sum_th7}.  


\begin{cor}                                                                                        \label{th9}
Let the notation be as above. There is a  certain  positive  constant   $\Theta$   
such that 
\begin{multline}                                                                                  \label{sum_th91}
U_{f,g}(s) 
=  -F'(s)G(s-1) - F(s)\left(G'(s-1) +G(s-1)\right) 
\\+ F(s)\left(-\frac12 G'(s)  + \log\sqrt{2\pi} G(s)  
+ \Theta G(s+1)\right),   
\end{multline}
where $\Re (s)> \max\{\sigma_{1},\sigma_{2}+1\}$,
and that 
\begin{multline}                                                                                  \label{sum_th92}
U_{f*\mu,\1}(s) 
=  \frac{(F(s)\zeta'(s)- F'(s)\zeta(s))\zeta(s-1)}{\zeta^{2}(s)} 
- \left(\zeta(s-1) + \zeta'(s-1)\right)\frac{F(s)}{\zeta(s)}\\
-  \frac{F(s)}{2}\frac{\zeta'(s)}{\zeta(s)}  + \log\sqrt{2\pi} F(s)  + \Theta \frac{F(s)}{\zeta(s)}{\zeta(s+1)},
\end{multline} 
in the region $\Re (s)> \max\{\sigma_{1},2\}$. 
\end{cor}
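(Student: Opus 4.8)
The plan is to read both identities off Theorem~\ref{th7}, whose statement $U_{f,g}(s) = -F'(s)G(s-1) + F(s)G_L(s)$ already isolates the only quantity needing work, namely the auxiliary series $G_L(s) = \sum_{k\ge 1} g(k)L(k)/k^{s}$ with $L(k) = \log k!$. First I would feed into $G_L$ the same Stirling-type expansion of $\log k!$ that underlies Theorem~\ref{th0}, namely
$$
L(k) = k\log k - k + \tfrac12\log k + \log\sqrt{2\pi} + \frac{\Theta}{k},
$$
read with the single positive constant $\Theta$ fixed there. Splitting the sum accordingly and using $G'(w) = -\sum_{k} g(k)(\log k)/k^{w}$, each piece becomes a shift or a derivative of $G$: the $k\log k$ term gives $-G'(s-1)$, the $-k$ term gives $-G(s-1)$, the $\tfrac12\log k$ term gives $-\tfrac12 G'(s)$, the constant gives $\log\sqrt{2\pi}\,G(s)$, and the remainder gives $\Theta\,G(s+1)$. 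Hence
$$
G_L(s) = -G'(s-1) - G(s-1) - \tfrac12 G'(s) + \log\sqrt{2\pi}\,G(s) + \Theta\,G(s+1),
$$
and substituting into \eqref{sum_th7} and collecting $-F(s)G'(s-1)-F(s)G(s-1)$ into $-F(s)(G'(s-1)+G(s-1))$ yields \eqref{sum_th91}. For validity I would note that the slowest terms are $F'(s)G(s-1)$, $F(s)G(s-1)$ and $F(s)G'(s-1)$; since $G(s-1)$ and $G'(s-1)$ converge absolutely for $\Re(s) > \sigma_{2}+1$ and $F,F'$ for $\Re(s) > \sigma_{1}$, the identity holds for $\Re(s) > \max\{\sigma_{1},\sigma_{2}+1\}$.

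For \eqref{sum_th92} I would specialize \eqref{sum_th91} to $f \mapsto f*\mu$ and $g \mapsto \1$. Writing $\tilde F(s)$ for the Dirichlet series of $f*\mu$, the convolution rule and $\sum_{k}\mu(k)/k^{s} = 1/\zeta(s)$ give $\tilde F(s) = F(s)/\zeta(s)$, and the quotient rule gives
$$
\tilde F'(s) = \frac{F'(s)\zeta(s) - F(s)\zeta'(s)}{\zeta^{2}(s)}.
$$
For $g=\1$ one has $G = \zeta$, so every occurrence of $G$ and $G'$ in \eqref{sum_th91} becomes a value or derivative of $\zeta$ at $s-1$, $s$, or $s+1$. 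Substituting, the term $-\tilde F'(s)\zeta(s-1)$ becomes $(F(s)\zeta'(s)-F'(s)\zeta(s))\zeta(s-1)/\zeta^{2}(s)$; the term $-\tilde F(s)(\zeta'(s-1)+\zeta(s-1))$ is already in the required form $-(\zeta(s-1)+\zeta'(s-1))F(s)/\zeta(s)$; and the last bracket $\tilde F(s)(-\tfrac12\zeta'(s)+\log\sqrt{2\pi}\,\zeta(s)+\Theta\zeta(s+1))$ simplifies, via $\tilde F(s)\zeta(s)=F(s)$, to $-\tfrac{F(s)}{2}\zeta'(s)/\zeta(s)+\log\sqrt{2\pi}\,F(s)+\Theta\frac{F(s)}{\zeta(s)}\zeta(s+1)$. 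This is precisely \eqref{sum_th92}. The region $\Re(s) > \max\{\sigma_{1},2\}$ is forced by $\zeta(s-1)$ (absolutely convergent for $\Re(s)>2$) together with $1/\zeta(s)=\sum_{k}\mu(k)/k^{s}$ (valid for $\Re(s)>1$).

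I expect the only genuinely delicate point to be the bookkeeping of the remainder constant $\Theta$: strictly, the Stirling remainder $L(k)-(k\log k - k + \tfrac12\log k + \log\sqrt{2\pi})$ is $k$-dependent rather than literally a constant times $1/k$, so one must interpret $\Theta$ exactly as it is fixed in Theorem~\ref{th0} and check that the associated tail series $\sum_{k} g(k)(\cdots)/k^{s+1}$ converges absolutely for $\Re(s) > \sigma_{2}$, comfortably inside both claimed regions, so that recording it as $\Theta\,G(s+1)$ is consistent. Everything else is routine term-by-term manipulation of absolutely convergent Dirichlet series, one application of the quotient rule, and elementary tracking of abscissae of convergence.
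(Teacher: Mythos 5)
Your proposal is correct and follows essentially the same route as the paper: substitute the Stirling expansion of $L(k)$ into $G_L(s)$ term by term to get \eqref{sum_th91} from Theorem~\ref{th7}, then specialize $f\mapsto f*\mu$, $g=\1$ with $\tilde F(s)=F(s)/\zeta(s)$ to get \eqref{sum_th92}; the paper merely writes the second step as explicit double Dirichlet sums where you invoke the quotient rule, which is the same computation. Your closing caveat about the $k$-dependence of the Stirling remainder matches the level of rigor the paper itself adopts for the constant $\Theta$.
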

As a consequence of the above,we immediately get the following formulas:
\begin{multline}                                                                                  \label{sum_th1001}
U_{\id,\mu}(s) 
=  \frac{\zeta(s-1)\zeta'(s)}{2\zeta^{2}(s)} + \log\sqrt{2\pi}\frac{\zeta(s-1)}{\zeta(s)} 
 + \Theta \frac{\zeta(s-1)}{\zeta(s+1)} 
 \\-\frac{\zeta'(s-1)}{\zeta(s-1)} + \frac{\zeta(s)}{\zeta(s-1)}\left(\frac{\zeta'(s-1)}{\zeta(s-1)}-1\right),
\end{multline}
and 
\begin{multline}                                                                            \label{sum_th1002}
U_{\phi,\1}(s) 
=  \frac{(\zeta(s-1)\zeta'(s)- \zeta'(s-1)\zeta(s))\zeta(s-1)}{\zeta^{2}(s)} 
 - \left(\zeta(s-1) + \zeta'(s-1)\right)\frac{\zeta(s-1)}{\zeta(s)}  \\ 
-  \frac{\zeta(s-1)}{2}\frac{\zeta'(s)}{\zeta(s)}       
 + \log\sqrt{2\pi} \zeta(s-1)  + \Theta \frac{\zeta(s-1)}{\zeta(s)}{\zeta(s+1)},
\end{multline}
where $\Re (s)> 2$. 

\section{Auxiliary results}


In order to prove Theorems~\ref{th4} and \ref{th5}, we prepare the next lemmas.





\begin{lem}                                                                     \label{lem10}
Let $\gamma$ denote the Euler constant. 
For any sufficiently large positive number $x>1$,  we have
\begin{multline}                                                               \label{lem110}
\sum_{n\leq x} \frac{(\id*\phi)(n)}{n}  \log \frac{n}{e}  = 
\frac{1}{\zeta(2)}x(\log x)^2 + \frac{1}{\zeta(2)}\left(2\gamma - 3 -\frac{\zeta'(2)}{\zeta(2)}\right)x\log x  \\
- \frac{1}{\zeta(2)}\left(4\gamma -3 -2\frac{\zeta'(2)}{\zeta(2)}\right)x + M(x),   
\end{multline}
and  
\begin{multline}                                                               \label{lem120}
\sum_{n\leq x} \frac{(\phi*\phi)(n)}{n} \log \frac{n}{e}  
= \frac{1}{\zeta^{2}(2)}x(\log x)^2 
+ \frac{1}{\zeta^{2}(2)}\left(2\gamma - 3 -2\frac{\zeta'(2)}{\zeta(2)}\right)x\log x  \\
- \frac{1}{\zeta^{2}(2)}\left(4\gamma -3 -4\frac{\zeta'(2)}{\zeta(2)}\right)x + \widetilde{P}(x),   
\end{multline}
where the functions $M(x)$ and $\widetilde{P}(x)$ are given by 
\begin{equation}     
\label{M}
M(x) = \sum_{n\leq x}\frac{\mu(n)}{n}\Delta\left(\frac{x}{n}\right)\log\frac{x}{e} + O\left((\log x)^2\right), 
\end{equation} 
and   
\begin{equation}                                                                \label{wideP}
\widetilde{P}(x) = \sum_{n\leq x}\frac{(\mu*\mu)(n)}{n}\Delta\left(\frac{x}{n}\right)\log\frac{x}{e} 
+ O\left((\log x)^3\right).   
\end{equation} 
\end{lem}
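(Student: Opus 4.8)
The plan is to reduce both identities to the single-logarithm sums $Y(x;\id)$ and $Y(x;\phi)$ of \eqref{Cesaro}, and then restore the weight $\log(n/e)$ by the exact Abel summation recorded in Remark~\ref{rem1}. Since $\phi=\id*\mu$ and $(\id*\id)(n)=n\tau(n)$, a comparison of Dirichlet series (both sides have generating function $\zeta^{2}(s)/\zeta(s+1)$, resp.\ $\zeta^{2}(s)/\zeta^{2}(s+1)$) gives the pointwise factorisations
\begin{equation*}
\frac{(\id*\phi)(n)}{n}=\Big(\tfrac{\mu}{\id}*\tau\Big)(n),
\qquad
\frac{(\phi*\phi)(n)}{n}=\Big(\tfrac{\mu*\mu}{\id}*\tau\Big)(n),
\end{equation*}
where $\tfrac{\mu}{\id}(d)=\mu(d)/d$. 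Consequently $Y(x;\id)=\sum_{d\le x}\frac{\mu(d)}{d}\sum_{m\le x/d}\tau(m)$, and by \eqref{Cesaro} the first identity of Remark~\ref{rem1} reads
\begin{equation*}
\sum_{n\le x}\frac{(f*\phi)(n)}{n}\log\frac{n}{e}
=Y(x;f)\log\frac{x}{e}-\int_{1}^{x}Y(u;f)\frac{du}{u},
\end{equation*}
so it suffices to expand $Y(x;\id)$ and $Y(x;\phi)$ while keeping their $\Delta$-terms explicit.

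For $Y(x;\id)$ I would insert the classical estimate $\sum_{m\le y}\tau(m)=y\log y+(2\gamma-1)y+\Delta(y)$, expand $\log(x/d)=\log x-\log d$, and use $\sum_{d\ge1}\mu(d)/d^{2}=1/\zeta(2)$ and $\sum_{d\ge1}\mu(d)\log d/d^{2}=\zeta'(2)/\zeta^{2}(2)$, the tails $\sum_{d>x}$ being $O(\log x/x)$. This yields
\begin{equation*}
Y(x;\id)=\frac{x\log x}{\zeta(2)}
+\frac{1}{\zeta(2)}\Big(2\gamma-1-\frac{\zeta'(2)}{\zeta(2)}\Big)x
+\sum_{d\le x}\frac{\mu(d)}{d}\Delta\Big(\frac{x}{d}\Big)+O(\log x).
\end{equation*}
Substituting into the Abel identity and carrying out the elementary integration $\int_{1}^{x}Y(u;\id)u^{-1}\,du$ produces the three polynomial terms; matching the coefficients of $x(\log x)^{2}$, $x\log x$ and $x$ reproduces the main part of \eqref{lem110} (for instance the $x$-coefficient comes out as $\tfrac{1}{\zeta(2)}-2c_{1}=-\tfrac{1}{\zeta(2)}(4\gamma-3-2\zeta'(2)/\zeta(2))$, with $c_{1}$ the constant above).

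The only delicate point is the $\Delta$-contribution. The term $Y(x;\id)\log\frac{x}{e}$ directly furnishes $\log\frac{x}{e}\sum_{d\le x}\frac{\mu(d)}{d}\Delta(x/d)$, the leading part of $M(x)$ in \eqref{M}. For the integral I would invoke the bound \eqref{Voronoi}, $\int_{1}^{X}\Delta(y)\,dy=O(X)$, which after one partial summation gives $\int_{1}^{V}\Delta(v)v^{-1}\,dv=O(\log V)$; hence
\begin{equation*}
\int_{1}^{x}\frac{1}{u}\sum_{d\le u}\frac{\mu(d)}{d}\Delta\Big(\frac{u}{d}\Big)\,du
=\sum_{d\le x}\frac{\mu(d)}{d}\int_{1}^{x/d}\frac{\Delta(v)}{v}\,dv
=O\Big(\sum_{d\le x}\frac{\log(x/d)}{d}\Big)=O\big((\log x)^{2}\big).
\end{equation*}
Together with the $O(\log x)$ error, promoted by the weight to $O((\log x)^{2})$, this shows the entire non-$\Delta$ remainder is $O((\log x)^{2})$, which is exactly \eqref{lem110} with $M(x)$ as in \eqref{M}.

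Formula \eqref{lem120} follows by repeating the argument with $\mu$ replaced by $\mu*\mu$ throughout, now using $\sum_{d\ge1}(\mu*\mu)(d)/d^{2}=1/\zeta^{2}(2)$ and $\sum_{d\ge1}(\mu*\mu)(d)\log d/d^{2}=2\zeta'(2)/\zeta^{3}(2)$; these account for the factors $1/\zeta^{2}(2)$ and the doubled $\zeta'(2)/\zeta(2)$ coefficients in \eqref{lem120}. The sole difference is the error size: since $|(\mu*\mu)(d)|\le\tau(d)$ one has $\sum_{d\le x}\frac{|(\mu*\mu)(d)|}{d}=O((\log x)^{2})$, so the analogue of the $O(\log x)$ error becomes $O((\log x)^{2})$, and after multiplication by $\log\frac{x}{e}$ and integration the leftover is $O((\log x)^{3})$, precisely the error allowed in \eqref{wideP}. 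I expect the main obstacle to be entirely the bookkeeping of these error terms---confirming that $\int_{1}^{x}Y(u)u^{-1}\,du$ absorbs the $\Delta$-sum within the stated $O$-bounds via \eqref{Voronoi}, and that the non-$\Delta$ tails contribute no more than $O((\log x)^{2})$, respectively $O((\log x)^{3})$.
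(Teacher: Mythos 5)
Your proposal is correct and follows essentially the same route as the paper: obtain the unweighted asymptotics for $\sum_{n\le x}(\id*\phi)(n)/n$ and $\sum_{n\le x}(\phi*\phi)(n)/n$ with the $\Delta$-sum kept explicit, insert the weight $\log(n/e)$ by Abel summation, and control $\int_1^x E(u)u^{-1}\,du$ and $\int_1^x P(u)u^{-1}\,du$ via the bound \eqref{Voronoi}. The only difference is that you rederive the intermediate identities (which the paper cites as (2.10) and (2.11) of \cite{K1}) directly from the Dirichlet divisor estimate via the factorisations $\frac{\id*\phi}{\id}=\frac{\mu}{\id}*\tau$ and $\frac{\phi*\phi}{\id}=\frac{\mu*\mu}{\id}*\tau$, which makes the argument self-contained but is not a genuinely different method.
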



\begin{proof} 
Notice that 
$
\frac{\id*\phi}{\id}= \frac{\phi}{\id}*\1.   
$
 From  the identity  (2.10) in \cite{K1}, 
we have  
\begin{eqnarray}                                                                      \label{lem111}
\sum_{n\leq x} \frac{(\id*\phi)(n)}{n}  
&=&\frac{1}{\zeta(2)}x\log x + \frac{x}{\zeta(2)}\left(2\gamma -1 - \frac{\zeta'(2)}{\zeta(2)}\right) + E(x), 
\end{eqnarray}
where  
\begin{equation}                                                                             \label{E}    
E(x) = \sum_{n\leq x}\frac{\mu(n)}{n}\Delta\left(\frac{x}{n}\right) + O\left(\log x\right). 
\end{equation} 
We use the partial summation and  \eqref{lem111}   to get 
\begin{align}                                                                              \label{lem112}
& \sum_{n\leq x}\frac{(\id*\phi)(n)}{n}\log\frac{n}{e} \nonumber \\
&=\left(\frac{1}{\zeta(2)}x\log x
 + \frac{x}{\zeta(2)}\left(2\gamma -1 - \frac{\zeta'(2)}{\zeta(2)}\right) + E(x)\right)\log \frac{x}{e} 
 \nonumber \\
&- \int_{1}^{x}\left(\frac{1}{\zeta(2)}u\log u 
+ \frac{u}{\zeta(2)}\left(2\gamma -1 - \frac{\zeta'(2)}{\zeta(2)}\right) 
+ E(u)\right)\frac{d u}{u} \nonumber \\
&=\frac{1}{\zeta(2)}x(\log x)^2
+ \frac{1}{\zeta(2)}\left(2\gamma - 3 -\frac{\zeta'(2)}{\zeta(2)}\right)x\log x \nonumber \\
&- \frac{1}{\zeta(2)}\left(4\gamma -3 -2\frac{\zeta'(2)}{\zeta(2)}\right)x 
 + E(x)\log\frac{x}{e} - \int_{1}^{x}\frac{E(u)}{u}du + O\left(1\right). 
\end{align}
Using \eqref{Voronoi} and integrating by part, we derive the estimation
\begin{eqnarray}                                                 
\label{lem113}
\int_{1}^{x}\frac{E(u)}{u}du
 &=&   \sum_{n\leq x}\frac{\mu(n)}{n}\int_{n}^{x}\Delta\left(\frac{u}{n}\right)\frac{d u}{u} 
+ O\left(\int_{1}^{x}\frac{\log u}{u}du\right)   \nonumber \\
&=&  
\sum_{n\leq x}\frac{\mu(n)}{n}\int_{1}^{x/n}\Delta\left(y\right)\frac{dy}{y}  
+ O\left((\log x)^2\right) \nonumber \\
&=& O\left(\sum_{n\leq x}\frac{1}{n}\left(1+\int_{1}^{x/n} \frac{dy}{y}\right)\right)  
+ O\left((\log x)^2\right) \nonumber \\
&=& O\left((\log x)^2\right). 
\end{eqnarray}
From  \eqref{lem112}  and \eqref{lem113},  we complete  the proof of \eqref{lem110}. \\

We use the identity  
$
\frac{\phi*\phi}{\id}= \frac{\mu * \phi}{\id}*\1,
$ 
 and (2.11) in \cite{K1} 
to deduce  
\begin{align}                                                               \label{lem121}
\sum_{n\leq x} \frac{(\phi*\phi)(n)}{n} 
&=\frac{1}{\zeta^{2}(2)}x\log x + \frac{x}{\zeta^{2}(2)}\left(2\gamma -1 - 2 \frac{\zeta'(2)}{\zeta(2)}\right) 
+ P(x), 
\end{align}
where  
\begin{align}                                                      \label{P} 
P(x) = \sum_{n\leq x}\frac{(\mu*\mu)(n)}{n}\Delta\left(\frac{x}{n}\right) + O\left((\log x)^2\right).
\end{align} 
Using the partial summation and \eqref{lem121}, we find that  
\begin{multline}                                                                 \label{lem122}
  \sum_{n\leq x}  \frac{(\phi*\phi)(n)}{n}\log\frac{n}{e} 
=\frac{1}{\zeta^{2}(2)}x(\log x)^2 
+ \frac{1}{\zeta^{2}(2)}\left(2\gamma - 3 -2\frac{\zeta'(2)}{\zeta(2)}\right)x\log x 
\\- \frac{1}{\zeta^{2}(2)}\left(4\gamma -3 -4\frac{\zeta'(2)}{\zeta(2)}\right)x 
+ P(x)\log\frac{x}{e} - \int_{1}^{x}\frac{P(u)}{u}du + O\left(1\right). 
\end{multline}
Again, we use \eqref{Voronoi} to get
\begin{eqnarray}                                                               \label{lem123}
\int_{1}^{x}\frac{P(u)}{u}du
&= &   \sum_{n\leq x}\frac{(\mu*\mu)(n)}{n}\int_{1}^{x/n}\Delta\left(u\right)\frac{du}{u} 
 + O\left((\log x)^3\right) \nonumber \\
&=&O\left(\sum_{n\leq x}\frac{\tau(n)}{n}\left(1+\int_{1}^{x/n} \frac{du}{u}\right)\right) 
 + O\left((\log x)^3\right)   \nonumber \\
&=& O\left((\log x)^3\right). 
\end{eqnarray}
From \eqref{lem122}   and  \eqref{lem123}, we complete the proof of \eqref{lem120}.   
\end{proof}





\begin{lem}                                                                         
\label{lem20}
For any sufficiently large positive number $x>1$  and  $-1<a<0$,  we have
\begin{align}                                                               
\label{lem210}
\sum_{n\leq x} & \frac{(\id_{1+a}*\phi)(n)}{n}  \log \frac{n}{e}  
= \frac{\zeta(1-a)}{\zeta(2)}x\log^{}x - \frac{2\zeta(1-a)}{\zeta(2)}x   \nonumber \\
&+ \frac{\zeta(1+a)}{(1+a)\zeta(2+a)}x^{1+a}\log x
 - \frac{(2+a)\zeta(1+a)}{(1+a)^{2}\zeta(2+a)}x^{1+a} +  M_{a}(x),   
\end{align}
and  
\begin{align}                                                               \label{lem220}
\sum_{n\leq x}& \frac{(\phi_{1+a}*\phi)(n)}{n} \log \frac{n}{e}  
= \frac{\zeta(1-a)}{\zeta^{2}(2)}x\log^{}x  - \frac{2\zeta(1-a)}{\zeta^{2}(2)}x \nonumber \\
& + \frac{\zeta(1+a)}{(1+a)\zeta^{2}(2+a)}x^{1+a}\log x 
- \frac{(2+a)\zeta(1+a)}{(1+a)^{2}\zeta^{2}(2+a)} x^{1+a} 
  + \widetilde{P}_{a}(x),   
\end{align}
where the functions $M_{a}(x)$  and $\widetilde{P}_{a}(x)$ are   given by 
\begin{equation}                                                                  
\label{Ma}
M_{a}(x) = \sum_{n\leq x}\frac{\mu(n)}{n}\Delta_{a}\left(\frac{x}{n}\right)\log\frac{x}{e} 
+ O_{a}\left(\log x\right), 
\end{equation}  
and 
\begin{equation}                                                                \label{widePa}
\widetilde{P}_{a}(x) = \sum_{n\leq x}\frac{(\mu*\mu)(n)}{n}\Delta_{a}\left(\frac{x}{n}\right)\log\frac{x}{e} 
+ O_{a}\left((\log x)^3\right).   
\end{equation} 
\end{lem}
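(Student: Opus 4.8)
The plan is to mimic the proof of Lemma~\ref{lem10}, treating \eqref{lem210} and \eqref{lem220} in parallel since they differ only by one convolution factor. First I would establish asymptotics for the two ``base'' sums $\sum_{n\le x}\frac{(\id_{1+a}*\phi)(n)}{n}$ and $\sum_{n\le x}\frac{(\phi_{1+a}*\phi)(n)}{n}$, and then introduce the weight $\log\frac{n}{e}$ by partial summation, which is precisely what generates the displayed four main terms.

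The starting point is the pair of convolution identities
$$
\frac{\id_{1+a}*\phi}{\id}=\frac{\mu}{\id}*\sigma_a,\qquad
\frac{\phi_{1+a}*\phi}{\id}=\frac{\mu*\mu}{\id}*\sigma_a,
$$
which one checks by comparing Dirichlet series (the left sides having generating functions $\frac{\zeta(s-a)\zeta(s)}{\zeta(s+1)}$ and $\frac{\zeta(s-a)\zeta(s)}{\zeta^2(s+1)}$) or directly from $\phi=\id*\mu$ and $\phi_{1+a}=\mu*\id_{1+a}$. They give
$$
\sum_{n\le x}\frac{(\id_{1+a}*\phi)(n)}{n}=\sum_{d\le x}\frac{\mu(d)}{d}\sum_{m\le x/d}\sigma_a(m),
$$
and the same with $\frac{\mu(d)}{d}$ replaced by $\frac{(\mu*\mu)(d)}{d}$. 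Substituting the asymptotic formula for $\sum_{m\le y}\sigma_a(m)$ recalled above with $y=x/d$, and completing the Dirichlet series $\sum_d\frac{\mu(d)}{d^2}=\frac{1}{\zeta(2)}$, $\sum_d\frac{\mu(d)}{d^{2+a}}=\frac{1}{\zeta(2+a)}$ (and their $\mu*\mu$ analogues $\frac{1}{\zeta^2(2)}$, $\frac{1}{\zeta^2(2+a)}$) to infinity with controlled tails, I obtain
$$
\sum_{n\le x}\frac{(\id_{1+a}*\phi)(n)}{n}=\frac{\zeta(1-a)}{\zeta(2)}x+\frac{\zeta(1+a)}{(1+a)\zeta(2+a)}x^{1+a}+E_a(x)+O_a(1),
$$
with $E_a(x)=\sum_{d\le x}\frac{\mu(d)}{d}\Delta_a(x/d)$; for the $\phi_{1+a}$ sum one replaces $\zeta$ by $\zeta^2$ at $2$ and $2+a$, replaces $E_a$ by $\widetilde E_a(x)=\sum_{d\le x}\frac{(\mu*\mu)(d)}{d}\Delta_a(x/d)$, and the tail becomes $O_a(\log x)$ (since $|\mu*\mu|\le\tau$).

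Next I would apply Abel summation in the form $\sum_{n\le x}c_n\log\frac{n}{e}=A(x)\log\frac{x}{e}-\int_1^x\frac{A(u)}{u}\,du$, with $A$ the relevant base sum. The smooth part reproduces all four main terms: for each power $x^{b}$ with coefficient $C$ in $A$, the pieces $Cx^{b}\log x-Cx^{b}$ from $A(x)\log\frac{x}{e}$ and $-\frac{C}{b}x^{b}$ from the integral combine to the $x^{b}\log x$ term and a fully explicit $x^{b}$ term with coefficient $-C\frac{b+1}{b}$. In particular $b=1$ gives $-\frac{2\zeta(1-a)}{\zeta(2)}$ and $b=1+a$ gives $-\frac{(2+a)\zeta(1+a)}{(1+a)^2\zeta(2+a)}$ (together with the $\zeta^2$-analogues), matching \eqref{lem210} and \eqref{lem220}. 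The surviving piece $E_a(x)\log\frac{x}{e}$ is exactly the $\Delta_a$-sum in $M_a(x)$ (resp.\ $\widetilde P_a(x)$), so the proof reduces to bounding $\int_1^x\frac{E_a(u)}{u}\,du$.

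This last estimate is the main obstacle and the only place the analytic input enters. Interchanging sum and integral and substituting $y=u/d$ gives $\int_1^x\frac{E_a(u)}{u}\,du=\sum_{d\le x}\frac{\mu(d)}{d}\int_1^{x/d}\frac{\Delta_a(y)}{y}\,dy$. Setting $V_a(Y)=\int_1^Y\Delta_a(y)\,dy=O_a(Y^{3/4+a/2})$ by \eqref{Voronoia} and integrating by parts, $\int_1^{Y}\frac{\Delta_a(y)}{y}\,dy=\frac{V_a(Y)}{Y}+\int_1^Y\frac{V_a(y)}{y^2}\,dy$; since $-1<a<0$ the boundary term is $O_a(Y^{-1/4+a/2})=O_a(1)$ and the exponent $\frac34+\frac a2-2<-1$ makes the remaining integral converge, so each inner integral is $O_a(1)$. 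Summing over $d$ yields $O_a(\log x)$ in the $\id_{1+a}$ case and $O_a((\log x)^2)$ in the $\phi_{1+a}$ case (via $\sum_{d\le x}\frac{\tau(d)}{d}\ll(\log x)^2$), both comfortably inside the stated error terms of $M_a(x)$ and $\widetilde P_a(x)$. This is exactly where the negative exponent helps: for $a=0$ one only has $V(Y)=O(Y)$, whence the inner integral grows like $\log Y$, which is why Lemma~\ref{lem10} carries the extra logarithm. Collecting the smooth and error parts then gives \eqref{lem210} and \eqref{lem220}.
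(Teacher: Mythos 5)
Your argument is correct and follows essentially the same route as the paper: the convolution identity reducing the weighted sum to $\frac{\mu}{\id}*\sigma_a$ (resp.\ its $\mu*\mu$ analogue), the base asymptotic with error $E_a(x)=\sum_{d\le x}\frac{\mu(d)}{d}\Delta_a(x/d)$, partial summation to insert $\log\frac{n}{e}$, and the bound $\int_1^x\frac{E_a(u)}{u}\,du=O_a(\log x)$ via \eqref{Voronoia}. The only difference is that you rederive the base formulas that the paper simply cites from \cite{KS} (Eqs.\ (58) and (60)), and you spell out the integration by parts against $V_a(Y)=O_a(Y^{3/4+a/2})$ that the paper leaves implicit; all constants and error terms match.
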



\begin{proof} 
Using the fact that   
$
\frac{\id_{1+a}*\phi}{\id}=\frac{\mu}{\id} * \sigma_{a},  
$
the identity (58) in \cite{KS}
\begin{equation}                                                              \label{lem211}
 \sum_{n\leq x}\frac{(\id_{1+a}*\phi)(n)}{n} 
=\frac{\zeta(1-a)}{\zeta(2)}x + \frac{\zeta(1+a)}{(1+a)\zeta(2+a)}x^{1+a} + E_{a}(x), 
\end{equation}
where  
\begin{equation}                                                                \label{Ea}    
E_{a}(x) = \sum_{n\leq x}\frac{\mu(n)}{n}\Delta_{a}\left(\frac{x}{n}\right) + O_{a}(1),
\end{equation} 
and the  partial summation, we get  
\begin{align}                                                                  \label{lem212}
& \sum_{n\leq x}\frac{(\id_{1+a}*\phi)(n)}{n}\log\frac{n}{e} \nonumber \\
&=\frac{\zeta(1-a)}{\zeta(2)}x\log^{}x - \frac{2\zeta(1-a)}{\zeta(2)}x  + \frac{\zeta(1+a)}{(1+a)\zeta(2+a)}x^{1+a}\log x  \nonumber \\
&- \frac{(2+a)\zeta(1+a)}{(1+a)^{2}\zeta(2+a)}x^{1+a} +  E_{a}(x)\log\frac{x}{e} - \int_{1}^{x}\frac{E_{a}(u)}{u}du + O_{a}\left(1\right). 
\end{align}
 Using \eqref{Voronoia}, we obtain that
\begin{eqnarray}                                                 
\label{lem213}
\int_{1}^{x}\frac{E_{a}(u)}{u}du 
 &=&   \sum_{n\leq x}\frac{\mu(n)}{n}\int_{n}^{x}\Delta_{a}\left(\frac{u}{n}\right)\frac{du}{u} + O_{a}\left(\log x\right)   \nonumber \\
&=& O_{a}\left(\log x \right). 
\end{eqnarray}
From \eqref{lem212} and \eqref{lem213}, we complete  the proof of \eqref{lem210}. \\

By the fact that 
$
\frac{\phi_{1+a}*\phi}{\id}= \frac{\phi_{1+a}*\mu}{\id}*\1 
$
and the following identity, see \cite[Eq.~(60)]{KS}, 
\begin{align}                                                               
\label{lem221}
\sum_{n\leq x} \frac{(\phi_{1+a}*\phi)(n)}{n} 
&=\frac{\zeta(1-a)}{\zeta^{2}(2)}x  + \frac{\zeta(1+a)}{(1+a)\zeta^{2}(2+a)} x^{1+a}  + P_{a}(x), 
\end{align}
where  
\begin{align}                                                                         \label{Pa} 
P_{a}(x) = \sum_{n\leq x}\frac{(\mu*\mu)(n)}{n}\Delta_{a}\left(\frac{x}{n}\right) 
+ O_{a}\left((\log x)^2\right),
\end{align} 
and the partial summation, we get 
\begin{multline}                                                                 
\label{lem222}
 \sum_{n\leq x} \frac{(\phi_{1+a}*\phi)(n)}{n}\log\frac{n}{e} 
=\frac{\zeta(1-a)}{\zeta^{2}(2)}x\log^{}x 
 + \frac{\zeta(1+a)}{(1+a)\zeta^{2}(2+a)}x^{1+a}\log x \\- \frac{2\zeta(1-a)}{\zeta^{2}(2)}x 
- \frac{(2+a)\zeta(1+a)}{(1+a)^{2}\zeta^{2}(2+a)} x^{1+a} 
 + P_{a}(x)\log\frac{x}{e} - \int_{1}^{x}\frac{P_{a}(u)}{u}du + O_{a}\left(1\right). 
\end{multline}
We use  \eqref{Voronoia} to deduce the estimate     
\begin{eqnarray}                                                               \label{lem223}
\int_{1}^{x}\frac{P_{a}(u)}{u}du 
&= &   \sum_{n\leq x}\frac{(\mu*\mu)(n)}{n}\int_{1}^{x/n}\Delta_{a}\left(u\right)\frac{du}{u}  
+ O_{a}\left((\log x)^3\right) \nonumber \\
&=& O_{a}\left((\log x)^3\right), 
\end{eqnarray}
From \eqref{lem222}  and \eqref{lem223}, we complete  the proof of \eqref{lem220}. 
\end{proof}


\section{Proof of Theorems~\ref{th4}  and  \ref{th5}}


\subsection{Proof of Theorem~\ref{th4}}


We take $f=\id$ into \eqref{sum_thL} to get 
\begin{eqnarray}                                              
\label{cor11-1}
L(x;\id) 
&= &\sum_{n\leq x} \frac{(\id*\phi)(n)}{n}  \log \frac{n}{e} 
+  \frac{1}{2} \sum_{n\leq x}\frac{(\id*\Lambda)(n)}{n} \nonumber  \\
&+& \log\sqrt{2\pi} \sum_{n\leq x} 1          
+  {\Theta} \sum_{n\leq x}\frac{(\id* \phi_{-1})(n)}{n}   \nonumber \\
&:=& I_{1,1}+ I_{1,2} +I_{1,3} +I_{1,4}, 
\end{eqnarray}
say.  It follows that 
\begin{equation}                                              \label{coro11-2}
I_{1,2}= \frac12 \sum_{dl\leq x}\frac{\Lambda(l)}{l} = - \frac{\zeta'(2)}{2\zeta(2)} x + O(\log x),  
\end{equation}
and 
\begin{equation}                                              \label{coro11-3}
I_{1,3}=  \log\sqrt{2\pi} \ x+ O(1). 
\end{equation}
Since the identities
$
\frac{\id*\phi_{-1}}{\id}=\frac{\phi_{-1}}{\id}*{\bf 1}, 
$
 $\phi_{-1}(n)=O(\sigma_{-1}(n))$ for any positive integer $n$
 and   the estimate 
\begin{align}                                                                    \label{sigma}
\sum_{n\leq x}\frac{\sigma_{-1}(n)}{n} = O(\log x).
\end{align}
Then, we  have  
\begin{eqnarray*}                                          
\sum_{n\leq x}\frac{(\id* \phi_{-1})(n)}{n}  
&=& x \sum_{l\leq x}\frac{\phi_{-1}(l)}{l^2} + O\left(\sum_{l\leq x}\frac{\sigma_{-1}(l)}{l}\right)  \nonumber \\
&=& \frac{\zeta(3)}{\zeta(2)}x + O\left(x \sum_{l>  x}\frac{\sigma_{-1}(l)}{l^2}\right) 
 + O\left(\log x\right)  \nonumber \\ 
&=& \frac{\zeta(3)}{\zeta(2)}x + O\left(\log x\right).  
\end{eqnarray*}
Thus
\begin{align}                                                             \label{coro11-4} 
I_{1,4}={\Theta} \frac{\zeta(3)}{\zeta(2)}x + O\left(\log x\right).  
\end{align} 
Substituting \eqref{coro11-2}, \eqref{coro11-3}, \eqref{coro11-4}, \eqref{lem110} and \eqref{M}  into \eqref{cor11-1}, 
 we obtain  
\begin{align*}                                              
L^{}(x;\id) 
&= \frac{1}{\zeta(2)}x(\log x)^2  + \frac{1}{\zeta(2)}\left(2\gamma - 3 -\frac{\zeta'(2)}{\zeta(2)}\right)x\log x 
\nonumber \\
&- \frac{1}{\zeta(2)}\left(4\gamma -3 -2\frac{\zeta'(2)}{\zeta(2)} + \frac{\zeta'(2)}{2} - {\Theta\zeta(3)}\right)x 
+ x \log\sqrt{2\pi} \nonumber \\ 
& + \sum_{n\leq x}\frac{\mu(n)}{n}\Delta\left(\frac{x}{n}\right)\log\frac{x}{e} + O\left((\log x)^2\right),    
\end{align*}
which  completes   the proof  of \eqref{sum_cor11}. \\

Next,  we take $f=\phi$ into \eqref{sum_thL} to get 
\begin{align}                                                 
\label{coro12-1}
L^{}(x;\phi) 
&= \sum_{n\leq x} \frac{(\phi*\phi)(n)}{n}  \log \frac{n}{e} 
+  \frac{1}{2} \sum_{n\leq x}\frac{(\phi*\Lambda)(n)}{n}   \nonumber \\
&+ \log\sqrt{2\pi} \sum_{n\leq x} \frac{\phi(n)}{n}          
+  {\Theta} \sum_{n\leq x}\frac{(\phi* \phi_{-1})(n)}{n}   \nonumber \\
&:= I_{2,1}+ I_{2,2} +I_{2,3} +I_{2,4}, 
\end{align}
say.  
Using the formula (2.2) in \cite{K1};   
$$
\sum_{n\leq x}\frac{\phi(n)}{n} = \frac{x}{\zeta(2)} + O\left((\log x)^{2/3}(\log\log x)^{4/3}\right),
$$
 we have 
\begin{align}                                                                   \label{coro12-3}
I_{2,3}=   \frac{\log\sqrt{2\pi}}{\zeta(2)} x + O\left((\log x)^{2/3}(\log\log x)^{4/3}\right). 
\end{align}
For $I_{2,2}$, we notice that 
\begin{eqnarray}                                                  \label{coro12-2}
I_{2,2}
&=& \frac{x}{2\zeta(2)}\sum_{n\leq x}\frac{\Lambda(n)}{n^2}  
+ O\left((\log x)^{2/3}(\log\log x)^{4/3}\sum_{n\leq x}\frac{\Lambda(n)}{n}\right) \nonumber \\
&=& \frac{x}{2\zeta(2)}\left(-\frac{\zeta'(2)}{\zeta(2)} 
+ O\left(\frac{\log x}{x}\right)\right)  
+ O\left((\log x)^{5/3}(\log\log x)^{4/3}\right) \nonumber \\
&=& - \frac{\zeta'(2)}{2\zeta^{2}(2)} x  + O\left((\log x)^{5/3}(\log\log x)^{4/3}\right).   
\end{eqnarray}
As for $I_{2,4}$, one can write
\begin{eqnarray*}                  
\sum_{n\leq x}\frac{(\phi* \phi_{-1})(n)}{n}  
&=& \sum_{l\leq x}\frac{\phi_{-1}(l)}{l}\left(\frac{1}{\zeta(2)}\frac{x}{l} 
+ O\left((\log x)^{2/3}(\log\log x)^{4/3}\right)\right)   \\
&=& \frac{x}{\zeta(2)}\sum_{d\leq x}\frac{\phi_{-1}(d)}{d^2} 
+ O\left((\log x)^{2/3}(\log\log x)^{4/3}\sum_{l\leq  x}\frac{\sigma_{-1}(l)}{l}\right) \\ 
&=& \frac{\zeta(3)}{\zeta^{2}(2)} x + O\left((\log x)^{5/3}(\log\log x)^{4/3}\right).  
\end{eqnarray*}
It follows that  
\begin{align}                                                 
\label{coro12-4}
I_{2,4}={\Theta}\frac{\zeta(3)}{\zeta^{2}(2)}x  + O\left((\log x)^{5/3}(\log\log x)^{4/3}\right).  
\end{align}
Substituting \eqref{coro12-3}--\eqref{coro12-4}, \eqref{lem120} and \eqref{wideP} into  \eqref{coro12-1},  we conclude that  
\begin{align*}                                               
L^{}(x;\phi) 
&= \frac{1}{\zeta^{2}(2)}x(\log x)^2 
+ \frac{1}{\zeta^{2}(2)}\left(2\gamma - 3 -2\frac{\zeta'(2)}{\zeta(2)}\right)x\log x \nonumber \\
&- \frac{1}{\zeta^{2}(2)}\left(4\gamma -3 -4\frac{\zeta'(2)}{\zeta(2)}  + \frac{\zeta'(2)}{2} 
- {\Theta\zeta(3)}\right)x 
+ \frac{\log\sqrt{2\pi}}{\zeta(2)}x \nonumber \\
& + \sum_{n\leq x}\frac{(\mu*\mu)(n)}{n}\Delta\left(\frac{x}{n}\right)\log\frac{x}{e} 
+ O\left((\log x)^3\right).    
\end{align*}
Therefore, the desired result is proved.


\subsection{Proof of Theorem~\ref{th5}}


We take $f=\id_{1+a}$ into \eqref{sum_thL}  to get 
\begin{eqnarray}                                              
\label{coro21-1}
L(x;\id_{1+a}) 
&= &\sum_{n\leq x} \frac{(\id_{1+a}*\phi)(n)}{n}  \log \frac{n}{e} 
+  \frac{1}{2} \sum_{n\leq x}\frac{(\id_{1+a}*\Lambda)(n)}{n} \nonumber  \\
& +& \log\sqrt{2\pi} \sum_{n\leq x} n^{a}          
+  {\Theta} \sum_{n\leq x}\frac{(\id_{1+a}* \phi_{-1})(n)}{n}   \nonumber \\
&:=& J_{1,1}+ J_{1,2} +J_{1,3} +J_{1,4}, 
\end{eqnarray}
say. 
From the formula (53) in \cite{KS} 
\begin{align}                                                                          
\label{VV}
\sum_{n\leq x}n^a =\frac{x^{1+a}}{1+a} + \zeta(-a) + O_{a}\left(x^a \right),
\end{align}
with $-1<a<0$,  we have 
\begin{equation}                                                                      
\label{coro21-3}
J_{1,3}=  \frac{\log\sqrt{2\pi}}{1+a}x^{1+a} + \zeta(-a)\log\sqrt{2\pi} + O_{a}(x^a),  
\end{equation} 
and 
\begin{align}                                                                         
\label{coro21-2}
J_{1,2}
&= \frac12 \sum_{l\leq x}\frac{\Lambda(l)}{l}\sum_{d\leq x/l}d^a  \nonumber \\ 
&= - \frac{\zeta'(2+a)}{2(1+a)\zeta(2+a)} x^{1+a} + O_{a}(\log x).  
\end{align}
Since the identity 
$
\frac{\id_{1+a}*\phi_{-1}}{\id}= \frac{\phi_{-1}}{\id}*{\id_{a}}  
$
and the estimates 
$$
\sum_{n\leq x}\frac{\phi_{-1}(n)}{n} = O(1),  \qquad
 \sum_{n\leq x}\frac{\sigma_{-1}(n)}{n^{1+a}} = O_{a}(x^{-a}),  
$$
we get  
\begin{eqnarray*}                                          
\sum_{n\leq x}\frac{(\id_{1+a}* \phi_{-1})(n)}{n} 
&=& \frac{x^{1+a}}{1+a} \sum_{l\leq x}\frac{\phi_{-1}(l)}{l^{2+a}} 
+ \zeta(-a)\sum_{d\leq x}\frac{\phi_{-1}(d)}{d} + 
O_{a}\left(x^{a}\sum_{l\leq x}\frac{\sigma_{-1}(l)}{l^{1+a}}\right) \\
&=& \frac{\zeta(3+a)}{(1+a)\zeta(2+a)}x^{1+a} + O_{a}(1).   
\end{eqnarray*}
Thus
\begin{align}                                                                       \label{coro21-4} 
J_{1,4}= {\Theta} \frac{\zeta(3+a)}{(1+a)\zeta(2+a)}x^{1+a}  + O_{a}\left(1\right).  
\end{align}  
On substituting \eqref{coro21-3}--\eqref{coro21-4}, \eqref{lem210} and \eqref{Ma} into \eqref{coro21-1},  we obtain  
\begin{align*}                                              
L^{}(x;\id_{1+a}) 
&= \frac{\zeta(1-a)}{\zeta(2)}x\log^{}x 
-  \frac{2\zeta(1-a)}{\zeta(2)} x 
+  \frac{\zeta(1+a)}{(1+a)\zeta(2+a)} x^{1+a}\log x  \nonumber \\
&- \frac{1}{(1+a)\zeta(2+a)}\left(\frac{(2+a)\zeta(1+a)}{1+a} 
                                + \frac{\zeta'(2+a)}{2} 
 - {\Theta\zeta(3+a)}\right)x^{1+a}         \nonumber \\
& + \frac{\log\sqrt{2\pi}}{1+a}x^{1+a} 
+  \sum_{n\leq x}\frac{\mu(n)}{n}\Delta_{a}\left(\frac{x}{n}\right)\log\frac{x}{e} 
+  O_{a}\left(\log x\right),    
\end{align*}
which  completes   the proof  of \eqref{sum_cor21}. \\

Next,  we take $f=\phi$ into \eqref{sum_thL}  to get 
\begin{align}                                                 
\label{coro22-1}
L^{}(x;\phi_{1+a}) 
&= \sum_{n\leq x} \frac{(\phi_{1+a}*\phi)(n)}{n}  \log \frac{n}{e} 
+  \frac{1}{2} \sum_{n\leq x}\frac{(\phi_{1+a}*\Lambda)(n)}{n}   \nonumber \\
&+ \log\sqrt{2\pi} \sum_{n\leq x} \frac{\phi_{1+a}(n)}{n}          
+  {\Theta} \sum_{n\leq x}\frac{(\phi_{1+a}* \phi_{-1})(n)}{n}   \nonumber \\
&:= J_{2,1}+ J_{2,2} + J_{2,3} + J_{2,4}, 
\end{align}
say.  
From \eqref{VV}, we have 
\begin{eqnarray}                                                    
\label{UU}
\sum_{n\leq x}\frac{\phi_{1+a}(n)}{n} 
&=&  \sum_{l\leq x} \frac{\mu(l)}{l}\left( \frac{1}{(1+a)}\left(\frac{x}{l}\right)^{1+a}+\zeta(-a)+O_a\left(\left(\frac{x}{l}\right)^{a}\right)\right) \nonumber\\
&=& \frac{1}{(1+a)\zeta(2+a)}x^{1+a} + O_{a}(1).
\end{eqnarray}
It follows that 
\begin{align} 
\label{coro22-3}
J_{2,3} =   
\frac{\log\sqrt{2\pi}}{(1+a)\zeta(2+a)} x^{1+a} + O_{a}(1), 
\end{align}
and that  
\begin{eqnarray}                                                  \label{coro22-2}
J_{2,2}
&=& \frac{x^{1+a}}{2(1+a)\zeta(2+a)}\sum_{l\leq x}\frac{\Lambda(l)}{l^{2+a}}  
+ O_{a}\left(\log x\right)                                                     \nonumber \\
&=& - \frac{\zeta'(2+a)}{2(1+a)\zeta^2({2+a})} x^{1+a}  + O_{a}\left(\log x\right).   
\end{eqnarray}
Using \eqref{UU} and the formula 
$$
\sum_{n\leq x}\frac{\phi_{-1}(d)}{d^{2+a}} =
\frac{\zeta(3+a)}{\zeta(2+a)} + O\left(x^{-1-a}\right), 
$$
we can check 
\begin{align*}                  
   \sum_{n\leq x}\frac{(\phi_{1+a}* \phi_{-1})(n)}{n}  
&= \frac{\zeta(3+a)}{(1+a)\zeta^{2}(2+a)} x^{1+a} + O_{a}\left(\log x\right).  
\end{align*}
Thus    
\begin{align}                                                  \label{coro22-4}
J_{2,4}= {\Theta}\frac{\zeta(3+a)}{(1+a)\zeta^{2}(2+a)} x^{1+a} + O_{a}\left(\log x\right). 
\end{align}
Putting everything together and \eqref{lem220},  we conclude that  
\begin{align*}                                               
L^{}& (x;\phi_{1+a}) 
=  \frac{\zeta(1-a)}{\zeta^2(2)}x\log^{}x 
-  \frac{2\zeta(1-a)}{\zeta^2(2)} x 
+  \frac{\zeta(1+a)}{(1+a)\zeta^2(2+a)} x^{1+a}\log x  \nonumber \\
&- \frac{1}{(1+a)\zeta^2(2+a)}\left(\frac{(2+a)\zeta(1+a)}{(1+a)}   + \frac{\zeta'(2+a)}{2} 
 - {\Theta\zeta(3+a)}\right)x^{1+a}         \nonumber \\
& + \frac{\log\sqrt{2\pi}}{(1+a)\zeta(2+a)}x^{1+a} 
+  \sum_{n\leq x}\frac{(\mu*\mu)(n)}{n}\Delta_{a}\left(\frac{x}{n}\right)\log\frac{x}{e} 
+  O_{a}\left((\log x)^3\right).    
\end{align*}
 Therefore, the formula \eqref{sum_cor22} is proved. 


\section{Proof of Theorems~\ref{th0} and \ref{corol0}} 

\subsection{Proof of Theorem~\ref{th0}}

We recall that the Stirling formula, see~\cite[Eq. (2.8)]{Ni}, is given by  
\begin{equation}                                                                      
\label{Nielsen}
 L(l):= \sum_{m=1}^{l}\log m 
=l\log l - l +\frac12 \log l + \log\sqrt{2\pi} + \frac{\vartheta}{12l}
\end{equation}
for any positive integer $l$ and  $\vartheta$ is an  absolute constant satisfying $0<\vartheta<1$.
Using \eqref{KMU} and  \eqref{Nielsen}, 
there is a  certain  positive  constant   $\Theta$  such that   
\begin{eqnarray}                                                                              \label{KKK}   
K(x;f,g) 
&=&\sum_{k\leq x}\frac{(f\cdot \log * g\cdot \id)(k)}{k} 
+ \sum_{k\leq x}\frac{(f*g\cdot L)(k)}{k}  \nonumber \\
&=& \sum_{dl\leq x}\frac{f(d)\log d}{d}  g(l)   
+  \sum_{dl\leq x}\frac{f(d)}{d}  g(l)  \log l 
-  \sum_{dl\leq x}\frac{f(d)}{d}  g(l)                                \\ 
&&+ \frac12 \sum_{dl\leq x}\frac{f(d)}{d}\frac{g(l)\log l}{l}  
 + \log\sqrt{2\pi} \sum_{dl\leq x}\frac{f(d)}{d}\frac{g(l)}{l} 
 + \Theta \sum_{dl\leq x}\frac{f(d)}{d}\frac{g(l)}{l^2}.             \nonumber   
\end{eqnarray}
Using properties of the Dirichlet convolution, we get the formula~\eqref{sum_thK}. Which completes the proof. 
\subsection{Proof of Theorem~\ref{corol0}}
We take $f=\id$ and $g=\mu$ into  \eqref{sum_thK} to get 
$$
K(x;\id,\mu) = \sum_{k\leq x}\frac{1}{k}\sum_{j=1}^{k}c_{k}(j)\log j,
$$
and  
\begin{multline*}                                                                       
 K(x;\id,\mu) 
=  \sum_{n\leq x}\frac{(\id * \id \cdot \mu)(n)}{n} \log \frac{n}{e} 
+  \frac{1}{2} \sum_{n\leq x}\frac{(\id * \mu \cdot \log )(n)}{n}        \\  
+ \log\sqrt{2\pi} \sum_{n\leq x}\frac{(\id *\mu)(n)}{n}          
+  {\Theta} \sum_{n\leq x}\frac{(\id * \mu \cdot \id_{-1})(n)}{n}. 
\end{multline*}
It follows that 
\begin{multline}                                                                            \label{QQ}
K(x;\id,\mu) 
=  \sum_{n\leq x} \sum_{d|n}\mu(d)\log \frac{n}{e}
+  \frac{1}{2} \sum_{d\leq x}\frac{\mu(d)\log d}{d}\sum_{l\leq x/d}1 \\
+ \log\sqrt{2\pi} \sum_{n\leq x} \frac{\phi(n)}{n} +  {\Theta} \sum_{d\leq x}\frac{\mu(d)}{d^2}\sum_{l\leq x/d}1.  
\end{multline}
Since $\sum_{d|n}\mu(d)\log n = 0$ for any positive integer  $n$. Then, we have 
$$
\sum_{n\leq x} \sum_{d|n}\mu(d)\log \frac{n}{e} = -1.
$$
By \cite[Chapter I and VI (Sections 8 and 9)]{SMC}, we write  
$$
\log\sqrt{2\pi} \sum_{n\leq x} \frac{\phi(n)}{n}  = \frac{\log\sqrt{2\pi}}{\zeta(2)} x +  O\left((\log x)^{2/3}(\log \log x)^{4/3}\right), 
$$ 
\begin{align*}
\frac{1}{2} \sum_{d\leq x}\frac{\mu(d)\log d}{d}\sum_{l\leq x/d}1 
&=\frac{x}{2} \sum_{d\leq x}\frac{\mu(d)}{d^2}\log d + O\left(\sum_{d\leq x}\frac{\log d}{d}\right)  \\
&=\frac{\zeta'(2)}{2\zeta^{2}(2)}x + O\left(\log^{2}x\right),
\end{align*}
and 
\begin{align*}
{\Theta} \sum_{d\leq x}\frac{\mu(d)}{d^2}\sum_{l\leq x/d}1  
&=\frac{\Theta}{\zeta(3)} x + O(1).
\end{align*} 
Putting everything together we obtain the formula \eqref{sum_coroK}.
\begin{rem}                                                                                  \label{rem5}
Notice that the formula \eqref{QQ} can be derived by \eqref{toth}, \eqref{Nielsen} and the identity $\mu\cdot \log * \1 = -\Lambda$. That is   
\begin{equation}                                                                       \label{sumaia}
    \frac{1}{k} \sum_{j=1}^{k}c_{k}(j)\log j  
=  \frac{1}{2}\sum_{d|k}\frac{\mu(d)}{d}\log d + \log\sqrt{2\pi} \sum_{d|k}\frac{\mu(d)}{d} 
+ \frac{\vartheta}{12}\sum_{d|k}\frac{\mu(d)}{d^2}.
\end{equation}
That means 
\begin{multline*}                                                                           
 K(x;\id,\mu) 
= \frac{1}{2} \sum_{d\leq x}\frac{\mu(d)\log d}{d}\sum_{l\leq x/d}1    \\
 \qquad + \log\sqrt{2\pi} \sum_{n\leq x} \frac{\phi(n)}{n} +  {\Theta} \sum_{d\leq x}\frac{\mu(d)}{d^2}\sum_{l\leq x/d}1 
+ O(1).   
\end{multline*}
\end{rem}


\section{Proof of Corollaries \ref{corol2} and  \ref{th9}}



\subsection{Proof of  Corollary  \ref{corol2}} 


For $f=g=\1$, one can easy to see that 
$$
s_{k}(j) = \sum_{d|\gcd(k,j)}\1(d)\1\left(\frac{k}{d}\right) = \tau(\gcd(k,j)).
$$
We   use  \eqref{sum_thKL1}  to  obtain     
\begin{align}                                                                            \label{sum_thKL111}
& \sum_{k\leq x}\frac{1}{k}\sum_{j=1}^{k}\tau(\gcd(k,j)) \log j \\ 
&= \sum_{n\leq x} \frac{\sigma(n)}{n}  \log \frac{n}{e} 
+  \frac{1}{2} \sum_{n\leq x}\frac{l(n)}{n}  
 + \log\sqrt{2\pi} \sum_{n\leq x} \frac{\tau(n)}{n}       
+  {\Theta} \sum_{n\leq x}\frac{\sigma_{-1}(n)}{n}.    \nonumber 
\end{align}
Using the well-known formula  
$$
\sum_{n\leq x}\frac{\log n}{n} = \frac{1}{2}(\log x)^2 + A + O\left(\frac{\log x}{x}\right),
$$
with $A$ being a constant, and the partial summation, we get
\begin{align}                                                                              \label{A}
   \sum_{n\leq x}\frac{l(n)}{n}
&= \sum_{l\leq x}\frac{\log l}{l}\sum_{d\leq x/l}\frac{1}{d}  \nonumber \\
&= (\log x +\gamma)\sum_{l\leq x}\frac{\log l}{l} - \sum_{l\leq x}\frac{(\log l)^2}{l} + O\left(\log x\right)  \nonumber \\
&= \frac{1}{6}(\log x)^3 + \frac{\gamma}{2} (\log x)^2 + O\left(\log x\right).
\end{align}
By the following formula, see~\cite[Eq. (2.14)]{K},    
$$
\sum_{n\leq x}\frac{\sigma(n)}{n}=\zeta(2)x -\frac{1}{2}\log x +O((\log x)^{2/3}), 
$$
and the partial summation, we find that 
\begin{eqnarray}                                                                            
\label{B22}
\sum_{n\leq x}\frac{\sigma(n)}{n}\log\frac{n}{e}    
&=& \left(\zeta(2) x - \frac{1}{2}\log x + O\left((\log x)^{2/3}\right)\right)\log\frac{x}{e} \nonumber \\
&&- \int_{1}^{x}\left(\zeta(2)u-\frac{1}{2}\log u + O\left((\log u)^{2/3}\right)\right)\frac{du}{u} \nonumber \\
&=& \zeta(2)x\log x - 2\zeta(2)x -\frac{1}{4} (\log x)^2 + O\left((\log x)^{5/3}\right).
\end{eqnarray}
From \eqref{sigma}, \eqref{A}, \eqref{B22},  
and 
$$
\sum_{n\leq x}\frac{\tau(n)}{n}= \frac{1}{2}(\log x)^2 + 2\gamma \log x + O(1),
$$
(see~\cite[Eq. (2.20)]{K}),  we therefore deduce \eqref{sum_thKL12}.


\subsection{Proof of  Corollary  \ref{th9}}  


We recall that 
\begin{equation*}
U_{f, g}(s)=-F'(s)G(s-1)+F(s)G_L(s),
\end{equation*}
where 
\begin{equation*}
G_L(s)=\sum_{k=1}^{\infty}\frac{g(k)L(k)}{k^s}, 
\end{equation*}
and $L(k)=\sum_{m=1}^{k}\log m.$ Using \eqref{Nielsen}, we immediately get the formula 
\begin{multline*}                                                                              
 U_{f,g}(s)  
=-F'(s)G(s-1) - F(s)G'(s-1) - F(s)G(s-1)                                                      \\
 -\frac12 F(s)G'(s) + \log\sqrt{2\pi} F(s)G(s) + \Theta F(s)G(s+1). 
\end{multline*}
Again, we have 
\begin{multline*}                                                                                
 U_{f,g}(s)  
= \sum_{d=1}^{\infty} \frac{f(d)\log d}{d^s} \sum_{l=1}^{\infty}\frac{g(l)}{l^{s-1}}    
+  \sum_{d=1}^{\infty}\frac{f(d)}{d^s} \sum_{l=1}^{\infty}\frac{g(l)\log l}{l^{s-1}}             
-  \sum_{d=1}^{\infty}\frac{f(d)}{d^s} \sum_{l=1}^{\infty}\frac{g(l)}{l^{s-1}}   
\\+ \frac12 \sum_{d=1}^{\infty} \frac{f(d)}{d^s} \sum_{l=1}^{\infty}\frac{g(l)\log l}{l^s}     
 + \log\sqrt{2\pi} \sum_{d=1}^{\infty} \frac{f(d)}{d^s} \sum_{l=1}^{\infty}\frac{g(l)}{l^s} 
 + \Theta \sum_{d=1}^{\infty} \frac{f(d)}{d^s} \sum_{l=1}^{\infty}\frac{g(l)}{l^{s+1}}             
\end{multline*}
From this latter with $f$ replaced by  $f*\mu$ and $g=\1$ and using the identity  
$$
 \sum_{d=1}^{\infty} \frac{(f*\mu)(d)\log d}{d^s}\zeta(s-1)= \frac{\left(F'(s)\zeta(s)-F(s)\zeta'(s)\right)\zeta(s-1)}{\zeta^{2}(s)},
$$
we obtain 
\begin{multline*}                                                                              
U_{f,g}(s)                                       
= \frac{(F(s)\zeta'(s)- F'(s)\zeta(s))\zeta(s-1)}{\zeta^{2}(s)} 
- \left(\zeta(s-1) + \zeta'(s-1)\right)\frac{F(s)}{\zeta(s)}\\
 -  \frac{F(s)}{2}\frac{\zeta'(s)}{\zeta(s)}  + \log\sqrt{2\pi} F(s)  + \Theta \frac{F(s)}{\zeta(s)}{\zeta(s+1)}.
\end{multline*}
This completes the proof.

%
%
%
%

\bigskip


\section*{Acknowledgement}
The second  author is supported by the Austrian Science
Fund (FWF) : Project F5507-N26, which is part
of the special Research Program  `` Quasi Monte
Carlo Methods : Theory and Application''.


\bigskip

\medskip\noindent {\footnotesize Isao Kiuchi: Department of Mathematical Sciences, Faculty of Science,
Yamaguchi University, Yoshida 1677-1, Yamaguchi 753-8512, Japan. \\
e-mail: {\tt kiuchi@yamaguchi-u.ac.jp}}

\medskip\noindent {\footnotesize Sumaia Saad Eddin: 
Institute of Financial Mathematics and Applied Number Theory, Johannes Kepler University, Altenbergerstrasse 69, 4040 Linz, Austria.\\
e-mail: {\tt sumaia.saad\_eddin@jku.at}}


\begin{thebibliography}{0}
\bibitem{AA} D. R.  Anderson and T. M. Apostol. The evaluation of Ramanujan's sum and generalizations, 
{\it Duke Math. J.} {\bf 20}  (1952),  211--216.

\bibitem{C} E. Ces\'{a}ro.  \'{E}tude  moyenne  du plus grand commun diviseur de deux nombres, 
{\it Ann.  Mat.  Pura  Appl.}   {\bf 13}   (1885),  235--250.

\bibitem{H} M. N. Huxley. Exponential sums and lattice points III, 
{\it Proc. London Math. Soc.} {\bf 87} (2003),  591-609. 

\bibitem{K} I. Kiuchi. On sums of averages of generalized Ramanujan sums,
{\it Tokyo J. Math.}  {\bf 40} (2017),  255--275. 

\bibitem{K1} I. Kiuchi.  Sums of averages of gcd-sum functions, {\it J. Number Theory}  {\bf 176} (2017),  449--472. 

\bibitem{K2} I. Kiuchi.  Sums of averages of generalized Ramanujan sums,
{\it J. Number Theory} {\bf 180} (2017),  310--348.  
\bibitem{KMU}  I. Kiuchi,  M. Minamide  and  M. Ueda. Averages of Anderson--Apostol sums, 
{\it J. Ramanujan  Math.  Soc.}  {\bf 31}  (2016), 339--357.

\bibitem{KS} I. Kiuchi and  S. Saad Eddin. On sums of weighted averages of gcd-sum functions,  submitted (2018), arXiv: 1801.03647v1. 

\bibitem{Ni} N. Nielsen. {\it Handbuch der theorie der Gammafunktion}, 
Chelsea Publishing Company. Bronx, New York (1965). 

\bibitem{SMC}  J. S\'{a}ndor, D.S. Mitrinovi\'{c} and B. Crstici. {\it Handbook of Number Theory I},
Springer, (2006). 

\bibitem{To}\ L. T\'{o}th.  Averages of Ramanujan sums: Note on two papers by E.Alkan, 
{\it Ramanujan J. }  {\bf 35}  (2014), 149--156.


\end{thebibliography}
\end{document}